\title{On the Nature of Fractal Numbers and the Classical Continuum Hypothesis (CH)}
\author{Stanislav Semenov \\
\href{mailto:stas.semenov@gmail.com}{stas.semenov@gmail.com} \\
\href{https://orcid.org/0000-0002-5891-8119}{ORCID: 0000-0002-5891-8119}}
\date{April 12, 2025}
\theoremstyle{definition}
\newtheorem{definition}{Definition}[section]
\newtheorem{example}{Example}[section]
\theoremstyle{plain}
\newtheorem{theorem}[definition]{Theorem}
\newtheorem{lemma}[definition]{Lemma}
\newtheorem{corollary}[definition]{Corollary}
\newtheorem{proposition}[definition]{Proposition}
\newtheorem{principle}{Principle}[section]
\theoremstyle{remark}
\newtheorem*{remark}{Remark}
\begin{document}

\maketitle

\begin{abstract}
We propose a reinterpretation of the continuum grounded in the stratified structure of definability rather than classical cardinality. In this framework, a real number is not an abstract point on the number line, but an object expressible at some level \( \mathcal{F}_n \) of a formal hierarchy. We introduce the notion of \emph{fractal numbers}—entities defined not within a fixed set-theoretic universe, but through layered expressibility across constructive systems. This reconceptualizes irrationality as a relative property, depending on definability depth, and replaces the binary dichotomy between countable and uncountable sets with a gradated spectrum of definability classes. We show that the classical Continuum Hypothesis loses its force in this context: between \( \aleph_0 \) and \( \mathfrak{c} \) lies not a single cardinal jump, but a stratified sequence of definitional stages, each forming a countable-yet-irreducible approximation to the continuum. We argue that the real line should not be seen as a completed totality but as an evolving architecture of formal expressibility. We conclude with a discussion of rational invariants, the relativity of irrationality, and the emergence of a fractal metric for definitional density.
\end{abstract}

\subsection*{Mathematics Subject Classification}
03F60 (Constructive and recursive analysis), 26E40 (Constructive analysis), 03F03 (Proof theory and constructive mathematics)

\subsection*{ACM Classification}
F.4.1 Mathematical Logic, F.1.1 Models of Computation

\section{Prelude: Expressibility, Layers, and the Limits of Formality}

In this preliminary section, we lay out the core notions that underlie our reinterpretation of the continuum via stratified definability. We also provide a precise construction of the set \( \mathbb{F}_\omega \) of all admissible definability chains, establishing its cardinality and syntactic foundation without appealing to classical set-theoretic powersets. This serves both as a prelude to the current work and as a refinement of certain technical aspects from earlier articles.

\subsection*{Formal Systems and Expressibility}

We begin with a formal criterion for definability. A \emph{constructive formal system} \( \mathcal{F} \) is defined as a syntactic structure satisfying the following conditions:
\begin{itemize}
    \item The language of \( \mathcal{F} \) is built over a finite or recursively enumerable alphabet and has a countable syntax;
    \item All inference and construction rules are syntactically enumerable;
    \item Every object definable in \( \mathcal{F} \) is represented either by a finite derivation in the formal calculus of \( \mathcal{F} \), or by the Gödel code of a total recursive function whose totality is provable within \( \mathcal{F} \).
\end{itemize}

\begin{definition}[Definable Reals in \( \mathcal{F} \)]
A real number \( r \in \mathbb{R} \) belongs to \( \mathbb{R}_{\mathcal{F}} \) if there exists a sequence \( \{ q_n \} \subset \mathbb{Q} \) such that:
\begin{itemize}
    \item \( \mathcal{F} \) proves that \( \{ q_n \} \) is Cauchy with a convergence modulus \( m(n) \in \mathbb{N} \) definable in \( \mathcal{F} \);
    \item \( \mathcal{F} \) proves that \( \lim_{n \to \infty} q_n = r \).
\end{itemize}
\end{definition}

Each such set \( \mathbb{R}_{\mathcal{F}} \) is necessarily countable, as \( \mathcal{F} \) can define only countably many real numbers.

\begin{remark}[Notation Alignment]
In previous work \cite{Semenov2025FractalOrigin}, we denoted by \( \mathbb{R}_{S_n} \) the set of reals definable at level \( n \) of a stratified chain \( \{ \mathcal{F}_n \} \), and wrote \( \mathbb{R}_{S_\omega}^{\{\mathcal{F}_n\}} := \bigcup_n \mathbb{R}_{S_n} \) for the total closure.

In this article, we simplify notation:
\[
    \mathbb{R}_{\mathcal{F}} := \mathbb{R}_{S_n} \text{ when } \mathcal{F} = \mathcal{F}_n,
\]
\[
    \mathbb{R}_{\{\mathcal{F}_n\}} := \bigcup_n \mathbb{R}_{\mathcal{F}_n} = \mathbb{R}_{S_\omega}^{\{\mathcal{F}_n\}}.
\]
This emphasizes definability in \( \mathcal{F} \) rather than position \( n \).
\end{remark}

\subsection*{Fractal Numbers as Process-Defined Objects}

A \emph{fractal number} is defined not statically, but through some constructive process within a system \( \mathcal{F}_n \) along a stratified chain \( \{ \mathcal{F}_n \} \). The number \( r \) appears as soon as a system \( \mathcal{F}_n \) has sufficient expressive power to define it.

\begin{definition}[Fractal Degree]
Given \( r \in \mathbb{R}_{\{\mathcal{F}_n\}} \), the \emph{fractal degree} of \( r \) is the least index \( n \) such that \( r \in \mathbb{R}_{\mathcal{F}_n} \).
\end{definition}

Higher degrees correspond to deeper definitional complexity. This creates a layered model of real numbers, each emerging at a definable threshold.

\subsection*{Constructing \( \mathbb{F}_\omega \): A Canonical Enumeration}

Let \( \{ \mathcal{F}_i \}_{i \in \mathbb{N}} \) be a fixed enumeration of all countable constructive systems, each encoded by a finite string. 

\begin{definition}[Admissible Stratified Chain]
A sequence \( \{ \mathcal{F}_n \} \in \mathbb{F}_\omega \) is admissible if there exists a strictly increasing function \( f: \mathbb{N} \to \mathbb{N} \) such that:
\begin{itemize}
    \item \( \mathcal{F}_n := \mathcal{F}_{f(n)} \);
    \item \( \mathbb{R}_{\mathcal{F}_{f(n)}} \subsetneq \mathbb{R}_{\mathcal{F}_{f(n+1)}} \), i.e., each step strictly increases the class of definable real numbers.
\end{itemize}
\end{definition}

\begin{remark}[Constructivist Validity]
Each admissible chain \( \{ \mathcal{F}_n \} \) is computably determined via a strictly increasing function \( f: \mathbb{N} \to \mathbb{N} \). The underlying systems \( \mathcal{F}_i \) are effectively encoded by finite syntactic descriptions, and comparisons \( \mathbb{R}_{\mathcal{F}_{f(n)}} \subsetneq \mathbb{R}_{\mathcal{F}_{f(n+1)}} \) are assumed to be decidable within a fixed class of formal systems (e.g., subsystems of second-order arithmetic). The construction of \( \mathbb{F}_\omega \) does not rely on the Axiom of Choice.
\end{remark}

\begin{definition}[Continuity via Cantor Space]
A set \( X \) is said to be \emph{Cantor-continuous} (or simply \emph{continuous}) if there exists an injection from the Cantor space \( \{0,1\}^\mathbb{N} \) into \( X \), or vice versa. That is, \( |X| = \mathfrak{c} \), where \( \mathfrak{c} := |\{0,1\}^\mathbb{N}| \).
\end{definition}

\begin{theorem}[Constructive Continuity of \( \mathbb{F}_\omega \)]
\label{thm:Fomega-continuity}
The set \( \mathbb{F}_\omega \) of admissible stratified definability chains is Cantor-continuous: it has cardinality \( \mathfrak{c} \), the cardinality of Cantor space \( \{0,1\}^\mathbb{N} \).

This result is effective and requires no appeal to the Axiom of Choice or uncountable power sets. It holds in any metatheory capable of syntactically encoding computable binary sequences.
\end{theorem}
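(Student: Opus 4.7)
The plan is to prove $|\mathbb{F}_\omega| = \mathfrak{c}$ by a two-sided cardinality bound. For the upper bound, every admissible chain is uniquely encoded by its strictly increasing index function $f : \mathbb{N} \to \mathbb{N}$ relative to the fixed background enumeration $\{\mathcal{F}_i\}_{i \in \mathbb{N}}$. Since $|\mathbb{N}^{\mathbb{N}}| = \aleph_0^{\aleph_0} = \mathfrak{c}$, we immediately obtain $|\mathbb{F}_\omega| \le \mathfrak{c}$ using only countable products, with no set-theoretic power-set machinery.

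For the lower bound, I would first exhibit a single infinite \emph{base chain} $\mathcal{G}_0 \subsetneq \mathcal{G}_1 \subsetneq \mathcal{G}_2 \subsetneq \cdots$ whose definable-reals classes strictly increase. A natural witness is an iterated reflection tower over Peano arithmetic---$\mathrm{PA}$, $\mathrm{PA} + \mathrm{Con}(\mathrm{PA})$, $\mathrm{PA} + \mathrm{Con}(\mathrm{PA} + \mathrm{Con}(\mathrm{PA}))$, and so on---where each successor proves the totality of a recursive function that its predecessor does not, and hence defines a genuinely new real (for instance, the limit of a provably Cauchy series encoded by that function). Writing $\mathcal{G}_k = \mathcal{F}_{h(k)}$ for a strictly increasing $h : \mathbb{N} \to \mathbb{N}$ picked from the enumeration, this step is purely syntactic and uses no choice principle.

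Given the base chain, I would define the injection $\Phi : \{0,1\}^{\mathbb{N}} \to \mathbb{F}_\omega$ by
\[
    \Phi(b)_n := \mathcal{F}_{h(2n + b_n)}.
\]
The index map $n \mapsto h(2n + b_n)$ is strictly increasing because $2n + b_n < 2(n+1) + b_{n+1}$ for every $b_n, b_{n+1} \in \{0,1\}$, and the required strict inclusion $\mathbb{R}_{\Phi(b)_n} \subsetneq \mathbb{R}_{\Phi(b)_{n+1}}$ is inherited from the base chain. If $b \ne b'$ and $n_0$ is their first discrepancy, then $\Phi(b)_{n_0}$ and $\Phi(b')_{n_0}$ are two adjacent systems of the base chain, which differ by strictness; hence $\Phi(b) \ne \Phi(b')$ as sequences, and $|\mathbb{F}_\omega| \ge 2^{\aleph_0} = \mathfrak{c}$.

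The one genuinely nontrivial step is the existence of the base chain, specifically the verification that each increment actually enlarges the class of definable reals. A naive enumeration of formal systems could plateau at some level, so one must lean on a proof-theoretic strength hierarchy where each step has provably greater arithmetical content. Once this kernel is established, the rest is routine syntactic combinatorics, and aligns with the paper's constructivist stance: $\Phi$ is computable in its binary input, and the whole argument uses only countable products and finite syntactic descriptions, never an uncountable power set or an appeal to choice.
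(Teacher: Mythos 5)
Your overall architecture is sound and genuinely different from the paper's: you get the upper bound by coding each chain through its index function in \( \mathbb{N}^{\mathbb{N}} \), and the lower bound by interleaving a single strict base chain via \( n \mapsto h(2n+b_n) \), whereas the paper encodes chains by infinite subsets of \( \mathbb{N} \) and passes through characteristic functions with infinitely many ones (together with an effective verification procedure for strictness that your argument does not need, since you only require the existence of continuum-many admissible chains). Your parity trick is checked correctly: the index map is strictly increasing, admissibility is inherited from the base chain, and the first discrepancy \( n_0 \) yields the distinct systems \( \mathcal{G}_{2n_0} \neq \mathcal{G}_{2n_0+1} \), so \( \Phi \) is injective and choice-free.

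The gap is exactly where you locate the weight of the proof, in the base chain, and your proposed justification does not close it. The inference \enquote{the successor proves the totality of a recursive function that its predecessor does not, hence it defines a genuinely new real} is a non sequitur under the paper's definition of \( \mathbb{R}_{\mathcal{F}} \), which is extensional: \( r \in \mathbb{R}_{\mathcal{F}} \) only requires that \emph{some} provably Cauchy sequence with provable modulus converge to the external real \( r \). A newly provably total index need not produce a new real: the canonical function whose provable totality expresses \( \mathrm{Con}(\mathrm{PA}) \) is extensionally a constant function, and any real built from it as a provably convergent limit is already definable at the bottom level by a different sequence. So strictness \( \mathbb{R}_{\mathcal{G}_k} \subsetneq \mathbb{R}_{\mathcal{G}_{k+1}} \) does not follow merely from the reflection tower \( \mathrm{PA} \subsetneq \mathrm{PA}+\mathrm{Con}(\mathrm{PA}) \subsetneq \cdots \) being proof-theoretically strictly increasing, and nothing in your sketch supplies a witness real that the weaker system provably cannot capture by \emph{any} representation. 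The repair is the device the paper itself uses (Appendix option (A) and Theorem~\ref{thm:monotonic-growth}): take the base systems to be of strictly increasing arithmetical strength (e.g.\ with \( \Sigma^0_k \)-truth oracles or comprehension), so that the reals definable at level \( k \) are contained in \( \Delta^0_{k+1} \), and use the jump-encoded witness \( r_k = \sum_{j} \chi_{\emptyset^{(k+1)}}(j)\,2^{-j-1} \in \Delta^0_{k+2} \setminus \Delta^0_{k+1} \), which is definable at level \( k+1 \) but cannot be definable at level \( k \) for complexity reasons independent of the choice of representing sequence. With that substitution for your base chain, the rest of your argument goes through unchanged.
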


\begin{remark}
When interpreted within particular set-theoretic models:
\begin{itemize}[nosep]
    \item[(i)] In \( L \), where the Continuum Hypothesis holds, \( \mathfrak{c} \) may align with \( \aleph_1 \);
    \item[(ii)] In other models of ZFC, \( \mathfrak{c} \) may exceed \( \aleph_1 \).
\end{itemize}
This syntactic result depends only on the structure of definability chains and remains independent of set-theoretic ontology.
\end{remark}

\begin{proof}
We proceed by establishing a constructive bijection between admissible chains and a subset of Cantor space with cardinality \( \mathfrak{c} \).

\textbf{Step 1: Precise encoding of formal systems.} 
Each formal system \( \mathcal{F}_i \) is encoded by a finite Gödel number, ensuring:
\begin{itemize}
    \item Each has a computably enumerable set of theorems
    \item Each contains sufficient arithmetic for verifying convergence moduli
\end{itemize}

A set \( A \subseteq \mathbb{N} \) is called computably enumerable infinite if there exists a total computable function \( g: \mathbb{N} \to \mathbb{N} \) such that:
\begin{itemize}
    \item \( g \) is strictly increasing;
    \item \( A = \{ g(n) \mid n \in \mathbb{N} \} \)
\end{itemize}

\textbf{Step 2: Constructive characterization of admissible chains.}
We restrict ourselves to \emph{computably enumerable infinite subsets} \( A \subseteq \mathbb{N} \), for which there exists an algorithm that generates elements in ascending order. For such subsets, we constructively define:
\begin{equation}
    f_A(n) := \text{the } n\text{-th smallest element of } A
\end{equation}
This function \( f_A \) is provably computable when \( A \) is computably enumerable.

\textbf{Step 3: Effective verification of strict inclusion.}
For the condition \( \mathbb{R}_{\mathcal{F}_{f(n)}} \subsetneq \mathbb{R}_{\mathcal{F}_{f(n+1)}} \), we introduce a verification procedure \( V \) that:
\begin{enumerate}
    \item Enumerates all candidate reals \( r \in \mathbb{R}_{\mathcal{F}_{f(n+1)}} \) via their Cauchy sequence definitions
    \item For each candidate \( r \), verifies that \( r \) is definable in \( \mathcal{F}_{f(n+1)} \) but not constructible via any derivation or total recursive function whose correctness is provable in \( \mathcal{F}_{f(n)} \).
    \item Halts upon finding such a "witness" real \( r_n \) that serves as evidence of strict inclusion
\end{enumerate}

\smallskip
A rigorous justification of this step, including the constraints under which \( V \) is effective, is given in Appendix~\ref{appendix:step3-validity}.

\textbf{Step 4: Explicit bijection with a subset of Cantor space.}
We now establish a computable bijection:
\begin{enumerate}
    \item For each computably enumerable infinite \( A \subseteq \mathbb{N} \), we define its characteristic function \( \chi_A \in \{0,1\}^\mathbb{N} \) where \( \chi_A(n) = 1 \) iff \( n \in A \)
    \item Conversely, for any \( b \in \{0,1\}^\mathbb{N} \) containing infinitely many 1s, we define:
    \begin{equation}
        A_b = \{n \in \mathbb{N} \mid b(n)=1 \text{ and } \exists m>n: b(m)=1\}
    \end{equation}
    with an algorithm that generates elements by skipping zeros
\end{enumerate}

These mappings are computable and mutually inverse when restricted to sequences with infinitely many 1s, establishing the bijection.

\textbf{Step 5: Cardinality determination without AC.}
The set of computably enumerable infinite subsets of \( \mathbb{N} \) has cardinality \( \mathfrak{c} \) because:
\begin{itemize}
    \item It is uncountable (by a constructive diagonalization argument)
    \item It injects into \( \{0,1\}^\mathbb{N} \) (via characteristic functions)
    \item The construction requires no choice principles, as all selections are algorithmic
\end{itemize}

Therefore, \( |\mathbb{F}_\omega| = \mathfrak{c} \), established through purely constructive means without appeal to the Axiom of Choice or non-constructive assumptions.
\end{proof}

\begin{example}[Distinguishing Chains via Partial Encodings]
Let \( A \subseteq \mathbb{N} \) be an infinite subset, and define a chain \( \{\mathcal{F}_n^A\} \) such that \( \mathcal{F}_n^A \) includes, for each \( k \in A \cap \{0, \dots, n\} \), a formal axiom \( \phi_k \) asserting the value of the \( k \)-th digit of \( \pi \) in decimal expansion. Then for distinct sets \( A \neq B \), the corresponding definability closures \( \mathbb{R}_{\{\mathcal{F}_n^A\}} \) and \( \mathbb{R}_{\{\mathcal{F}_n^B\}} \) are distinct.

Hence, the number of pairwise non-equivalent definability chains — each defining distinct subsets of reals — is \( \mathfrak{c} \).
\end{example}

\subsection*{Fractal Model: Inclusions and Omissions}

The following table summarizes which types of real numbers are included or excluded in the fractal continuum \( \mathbb{R}^{\mathbb{F}_\omega} := \bigcup_{\{\mathcal{F}_n\} \in \mathbb{F}_\omega} \mathbb{R}_{\{\mathcal{F}_n\}} \):

\begin{center}
\begin{adjustbox}{max width=1.0\textwidth}
\begin{tabular}{@{} lcc @{} }
\toprule
\textbf{Real Number} & \textbf{Included in } \( \mathbb{R}^{\mathbb{F}_\omega} \)? & \textbf{Definability Chain Exists?} \\
\midrule
Rationals (e.g., 1, $\tfrac{3}{4}$) & Yes & $\mathcal{F}_0$ \\
Algebraics (e.g., $\sqrt{2}$) & Yes & $\mathcal{F}_1$ \\
Transcendentals (e.g., $\pi$, $e$) & Yes & Some $\mathcal{F}_n$ \\
Non-constructive reals (e.g., random from $\mathcal{P}(\mathbb{N})$) & No & None \\
Choice-dependent objects (e.g., Hamel basis) & No & None \\
\bottomrule
\end{tabular}
\end{adjustbox}
\captionof{table}{Definability of Common Real Numbers in the Fractal Model \( \mathbb{R}^{\mathbb{F}_\omega} \)}
\end{center}

\begin{remark}
For instance, \( \pi \in \mathbb{R}_{\mathcal{F}_n} \) when \( \mathcal{F}_n \) proves the convergence of the arithmetized Leibniz series; this holds for systems \( \mathcal{F}_n \supseteq \mathsf{ACA}_0 \). Similarly, \( e \in \mathbb{R}_{\mathcal{F}_n} \) if the exponential function is definable and provably total in \( \mathcal{F}_n \).
\end{remark}

\begin{remark}
Real numbers that are not definable by any effective sequence with a provable modulus of convergence in a constructive system are excluded from \( \mathbb{R}^{\mathbb{F}_\omega} \). This includes randomly chosen subsets of \( \mathbb{N} \) and reals whose existence requires the Axiom of Choice. For such numbers, no constructive system \( \mathcal{F}_n \) can certify their convergence from rational approximations.
\end{remark}

\subsection*{Relation to Reverse Mathematics}

Each stratified chain \( \{\mathcal{F}_n\} \in \mathbb{F}_\omega \) may be viewed as a generalization of the framework of Reverse Mathematics, extending definability hierarchies beyond the classical arithmetical subsystems of second-order arithmetic. While traditional Reverse Mathematics studies fragments such as \( \mathsf{RCA}_0 \), \( \mathsf{ACA}_0 \), and \( \mathsf{ATR}_0 \), our model allows for:

\begin{itemize}
    \item \textbf{Canonical Trajectories}: Chains mirroring standard subsystems:
    \begin{center}
    \begin{tabular}{@{} l l @{}}
    \( \mathsf{RCA}_0 \) & computable reals \\
    \( \mathsf{ACA}_0 \) & arithmetic closure: \( \pi, e \), power series \\
    \( \mathsf{ATR}_0 \) & transfinite-definable reals via well-founded recursion \\
    \end{tabular}
    \end{center}
    
    \item \textbf{Custom Trajectories}: Chains surpassing arithmetic, e.g., systems capable of defining:
    \begin{itemize}
        \item zeros of analytic functions (non-arithmetical reals),
        \item paths in non-separable function spaces (transcending \( \mathsf{ATR}_0 \)).
    \end{itemize}
\end{itemize}

This combinatorial diversity of admissible chains accounts for the continuum cardinality of \( \mathbb{R}^{\mathbb{F}_\omega} \), while ensuring that each definability layer remains strictly constructive.

\subsection*{Comparison with Recursive Analysis}

Recursive analysis assumes a fixed formal ground — such as Turing machines or arithmetic — and restricts definability to that single level. By contrast, our approach is stratified:

\begin{center}
\begin{adjustbox}{max width=0.8\textwidth}
\begin{tabular}{@{} lcc @{} }
\toprule
\textbf{Framework} & \textbf{Definability Model} & \textbf{Definable Reals} \\
\midrule
Recursive Analysis & Single system (e.g., TM) & $\aleph_0$ \\
Fractal Definability & Ascending chain $\{ \mathcal{F}_n \}$ & $\mathfrak{c}$ \\
\bottomrule
\end{tabular}
\end{adjustbox}
\captionof{table}{Comparison of Definability Models: Recursive vs. Stratified Frameworks}
\end{center}

\subsection*{Fractal vs. Classical Continuum}

Despite sharing the same cardinality \( \mathfrak{c} \), the fractal continuum \( \mathbb{R}^{\mathbb{F}_\omega} \) constructed in this framework is not equivalent to the classical real line \( \mathbb{R} \). The difference is not merely technical, but ontological: it concerns the very nature of what is meant by a continuum.

\begin{definition}[Fractal Continuum]
The \emph{fractal continuum} is defined as the union of all definable real numbers arising from all admissible chains of constructive formal systems:
\[
\mathbb{R}^{\mathbb{F}_\omega} := \bigcup_{\{\mathcal{F}_n\} \in \mathbb{F}_\omega} \bigcup_{n} \mathbb{R}_{\mathcal{F}_n}.
\]
Each real number \( r \in \mathbb{R}^{\mathbb{F}_\omega} \) must be explicitly definable in some system \( \mathcal{F}_n \) within a stratified chain.
\end{definition}

\begin{remark}[Conceptual Distinction]
The classical continuum \( \mathbb{R} \) is defined set-theoretically as a completed totality of cardinality \( \mathfrak{c} = |\mathcal{P}(\mathbb{N})| \), and includes elements that are non-constructive, non-definable, or dependent on the axiom of choice. By contrast, \( \mathbb{R}^{\mathbb{F}_\omega} \) is a constructively assembled universe: each real in it must be the limit of a rational sequence whose convergence is provable within some formal system. It is not a substructure of \( \mathbb{R} \) in the set-theoretic sense, but a separate construction grounded in process-relative definability and omitting non-definable elements.
\end{remark}

\begin{center}
\begin{adjustbox}{max width=\textwidth}
\begin{tabular}{@{} lcc @{}}
\toprule
\textbf{Property} & \textbf{Classical Continuum \( \mathbb{R} \)} & \textbf{Fractal Continuum \( \mathbb{R}^{\mathbb{F}_\omega} \)} \\
\midrule
Ontological Status & Completed totality & Layered definitional closure \\
Foundation & Power set \( \mathcal{P}(\mathbb{N}) \) & Stratified expressibility over \( \mathbb{F}_\omega \) \\
Construction & Set-theoretic postulate & Syntactic process \\
Inclusion Criteria & Arbitrary subset of \( \mathbb{N} \) & Constructively definable in some \( \mathcal{F}_n \) \\
Use of Choice & Allowed (e.g., for Hamel bases) & Excluded \\
Countability & Uncountable & Uncountable (via layered countable components) \\
Cardinality & \( \mathfrak{c} \) (external) & \( \mathfrak{c} \) (via admissible chains) \\
Uniform Completeness & Global object & No uniform enumeration \\
Model Type & Static & Process-relative \\
\bottomrule
\end{tabular}
\end{adjustbox}
\captionof{table}{Comparison of Classical vs. Fractal Continuum}
\end{center}

\begin{remark}[On Continuum Hypothesis]
This distinction renders the classical Continuum Hypothesis inapplicable to \( \mathbb{R}^{\mathbb{F}_\omega} \): the structure is not governed by cardinality gaps between \( \aleph_0 \) and \( \mathfrak{c} \), but by an infinite gradation of definability layers. There is no unique “intermediate size” to locate; instead, one encounters a lattice of countable stages with no uniform totality.
\end{remark}

\subsection*{Philosophical Perspective: The Classical Shadow and the Fractal Core}

The classical continuum \( \mathbb{R} \) presents itself as a completed totality — an unstructured ocean of real numbers, encompassing everything from computable to non-constructible, from definable to choice-dependent. In this vastness, no intrinsic hierarchy of definability exists: the computable and the random coexist without stratification, as if suspended in a homogeneous void.

By contrast, the fractal continuum \( \mathbb{R}^{\mathbb{F}_\omega} \) reveals a constructive skeleton behind this totality. It is built from countable, transparent definability layers, each corresponding to a formal system \( \mathcal{F}_n \), with strictly increasing expressive power. Every real number here emerges only through constructive means, and each occupies a determinate level of definitional complexity.

\begin{quote}
\emph{The classical continuum is a shadow — a chaotic projection without structure.} \\
\emph{The fractal continuum is its constructive core — a visible hierarchy that generates the shadow.}
\end{quote}

In this view, the classical real line appears as a completion of the fractal continuum by adding non-constructible elements — a closure that obscures the internal architecture of definability. The classical continuum thus lacks the fine gradation inherent in \( \mathbb{R}^{\mathbb{F}_\omega} \), where irrationality, expressibility, and complexity are all relative and measurable.

\vspace{1em}

\begin{center}
\renewcommand{\arraystretch}{1.3}
\begin{adjustbox}{max width=0.9\textwidth}
\begin{tabular}{@{} lcc @{}}
\toprule
\textbf{Aspect} & \textbf{Fractal Continuum \( \mathbb{R}^{\mathbb{F}_\omega} \)} & \textbf{Classical Continuum \( \mathbb{R} \)} \\
\midrule
Origin & Layered definability via \( \mathcal{F}_n \) & Postulated totality via \( \mathcal{P}(\mathbb{N}) \) \\
Structure & Stratified, countable-by-construction & Flat, unstructured \\
Internal hierarchy & Present (degrees, layers) & Absent \\
Inclusion of non-definables & No & Yes \\
Viewpoint & Process-relative & Set-theoretic \\
Philosophical metaphor & Illuminated source & Shadow projection \\
\bottomrule
\end{tabular}
\end{adjustbox}
\captionof{table}{Comparison between the fractal and classical continuum.}
\end{center}

\vspace{0.5em}

\noindent This perspective invites a reinterpretation of the continuum not as a primitive entity, but as the emergent limit of formal expressibility — a dynamic geometry of definability whose visible architecture replaces the opacity of classical assumptions.

This concludes the foundational prelude. We now proceed to formalize fractal numbers, define their degrees of expressibility, and explore their implications for the classical continuum hypothesis.

\section{Introduction: The Crisis of the Classical Continuum}

The classical conception of the real number continuum, grounded in the power set construction \( \mathbb{R} \cong \mathcal{P}(\mathbb{N}) \), presents the real line as a completed totality — a static set whose cardinality is fixed as \( \mathfrak{c} \), the cardinality of the continuum. This perspective, pioneered by Cantor and formalized in ZFC set theory, treats the continuum as a homogeneous space of all Dedekind cuts or Cauchy completions over \( \mathbb{Q} \), without regard to the process by which individual real numbers may be expressed or constructed \cite{cantor1874ueber, dedekind1963stetigkeit}.

However, foundational doubts regarding the ontological status of uncountable sets have long been raised. Brouwer, for instance, argued that the continuum is not a completed entity, but a ``medium of free becoming'' — an evolving mental construction that cannot be grasped in its totality \cite{brouwer1907}. This intuitionist critique, later reinforced by constructive analysis and reverse mathematics, revealed that many real numbers used in classical proofs are not explicitly definable in any constructive sense.

In contemporary foundational studies, this leads to a tension between:
\begin{itemize}
    \item \emph{The cardinality-based view}, where \( \mathbb{R} \) is defined via non-constructive postulates and includes objects inaccessible by any formal process;
    \item \emph{The definability-based view}, where real numbers are meaningful only insofar as they can be syntactically expressed, approximated, or constructed within a formal system.
\end{itemize}

In our prior work \cite{Semenov2025FractalOrigin, Semenov2025FractalAnalysis}, we introduced a stratified framework of definability — a layered hierarchy of constructive systems \( \{ \mathcal{F}_n \} \), each expanding the expressive power of the previous. Within this model, a real number is not statically postulated, but emerges through formal derivability and provable convergence. The real continuum, in this reinterpretation, is a \emph{constructive limit of definability}, not a completed set-theoretic totality.

This shift has profound implications for the Continuum Hypothesis (CH). Traditionally, CH asserts that no cardinality lies strictly between \( \aleph_0 \) and \( \mathfrak{c} \). But in a layered, fractal model of real numbers, cardinality becomes a secondary notion: the central structure is not a two-step ladder from countable to uncountable, but an infinite lattice of definability stages. Each level contributes new real numbers inaccessible to previous stages, yielding a continuum assembled from an unbounded process of formal construction — not a singular jump from \( \aleph_0 \) to \( \mathfrak{c} \).

This article formalizes the consequences of this paradigm. We introduce the notion of \emph{fractal numbers} — real numbers defined at some level \( \mathcal{F}_n \) in a stratified chain — and analyze the structure of the resulting continuum \( \mathbb{R}^{\mathbb{F}_\omega} \). Our model not only circumvents the classical CH but reframes the continuum as a syntactic and epistemic object, emphasizing definitional emergence over ontological assumption.

\section{Fractal Numbers Beyond Rational and Irrational}

The classical classification of real numbers — into rationals, algebraics, transcendentals, and uncomputables — is set-theoretic and static. It postulates the existence of objects with certain properties, but offers no account of their emergence. In this view, real numbers are abstract points inhabiting a homogeneous continuum; their distinction is determined not by how they are constructed, but by what axioms they satisfy. This yields a \emph{flat ontology}: real numbers simply exist, and the continuum is filled by assumption.

By contrast, the framework of fractal definability introduces a dynamic and layered conception of numberhood. In this setting, each real number arises not by fiat, but through a definitional process unfolding across a stratified sequence of formal systems \( \{ \mathcal{F}_n \} \). A number \( r \) becomes accessible only when a system \( \mathcal{F}_n \) possesses enough expressive power to define a convergent rational sequence \( \{q_k\} \to r \) with a provable modulus of convergence.

\begin{definition}[Origin Level and Definability Class]
Let \( r \in \mathbb{R}_{\{\mathcal{F}_n\}} \). The \emph{origin level} of \( r \), denoted \( \deg(r) \), is the least index \( n \) such that \( r \in \mathbb{R}_{\mathcal{F}_n} \). The set of all such numbers at level \( n \) is written \( \Delta_n := \mathbb{R}_{\mathcal{F}_n} \setminus \bigcup_{k < n} \mathbb{R}_{\mathcal{F}_k} \).
\end{definition}

This allows us to stratify the continuum into definability layers:
\[
    \mathbb{R}_{\{\mathcal{F}_n\}} = \bigcup_{n=0}^\infty \Delta_n.
\]
Each \( \Delta_n \) contains real numbers that \emph{first become expressible} at level \( n \). These are not just more complex — they are fundamentally unreachable from lower layers. 

\begin{remark}
This stratification gives rise to a new classification of real numbers: not only by algebraic properties or computability, but by their \emph{ontogenetic profile} — the formal path by which they emerge. Numbers thus acquire origin, ancestry, and definitional dependencies.
\end{remark}

\subsection*{Fractal Granularity of Numberhood}

In this framework, numbers are no longer atomic entities. Instead, each number possesses multiple structural features:
\begin{itemize}
    \item A definability origin \( \mathcal{F}_n \), marking the minimal system needed to express it;
    \item A chain ancestry \( \{ \mathcal{F}_k \}_{k \leq n} \), recording the formal evolution up to that level;
    \item A definability signature: the collection of properties and axioms required for its construction;
    \item A modality of emergence: limit point, explicit series, fixed point of definable function, etc.
\end{itemize}

This granular approach enables a richer theory of numberhood. Numbers become objects of epistemic structure, not merely values in a field. It also provides the foundation for a form of \emph{constructive number ontology}, where classes of numbers are not just defined by shared properties, but by common definitional histories.

\subsection*{Why This Classification Arises Naturally}

The stratified classification is not imposed arbitrarily. It arises from the internal dynamics of formal expressibility:
\begin{itemize}
    \item As systems grow in expressive power, they gain the ability to define new functions and convergence conditions;
    \item These capabilities are discrete and layered — they do not occur continuously, but via formal leaps;
    \item Hence, the emergence of real numbers is itself stratified: each new system brings a discrete jump in definability;
    \item This creates \emph{natural classes} of numbers: those accessible at each level, those strictly dependent on higher axioms, those whose definition can only arise in the limit.
\end{itemize}

From this perspective, the classical continuum \( \mathbb{R} \) is a projection — a collapse of all definitional distinctions into a flat ontology. The fractal continuum \( \mathbb{R}^{\mathbb{F}_\omega} \), by contrast, retains the internal structure of emergence. It enables us to \emph{ask why a number exists} in formal terms — not merely assert that it does.

\begin{example}[Level-Dependent Irrationality]
Let \( r = \sqrt{2} \in \mathbb{R} \). In the stratified model, it appears at level \( n = 1 \), assuming \( \mathcal{F}_1 \) contains the field axioms and completeness of \( \mathbb{Q} \). At level \( n = 0 \), where only basic arithmetic is available, \( r \) is irrational not by virtue of decimal unpredictability, but by formal inexpressibility. Hence, irrationality becomes \emph{relative to definability level}.
\end{example}

\subsection*{Toward a Future Ontology of Numbers}

Although not yet formalized in full ontological terms, this framework sets the stage for a future system of number theory based on:
\begin{itemize}
    \item The genealogy of numbers (how and where they arise);
    \item The dependencies of expression (which axioms are minimal for definability);
    \item The constructive boundaries of usage (where a number can be applied, proved, or computed);
    \item The modular hierarchy of numeric classes (each with its own closure rules and internal logic).
\end{itemize}

Such a reclassification offers a new paradigm for understanding number systems — not as static structures, but as evolving, definability-relative landscapes. It opens the possibility of analyzing mathematical practice itself: why certain numbers arise naturally in proofs, how complexity correlates with expressibility, and what hidden structure governs the appearance of ``unpredictable'' numeric behavior.

\subsection*{Toward a Taxonomy of Stratified Numbers}

The layered structure of definability gives rise to a new typology of real numbers, grounded not in set-theoretic properties, but in their formal origin, expressive complexity, and construction modality. Below we outline some of the potential classes that emerge in this framework (Table~\ref{tab:stratified-classes}).

\begin{table}[ht]
\centering
\renewcommand{\arraystretch}{1.25}
\begin{tabular}{@{}p{4.2cm}p{9.6cm}@{}}
\toprule
\textbf{Class} & \textbf{Description} \\
\midrule
Primitive Numbers & Arithmetical constants definable in minimal systems \( \mathcal{F}_0 \). \\
Algebraic Definables & Roots of polynomials over \( \mathbb{Q} \), expressible in \( \mathcal{F}_1 \). \\
Analytic Definables & Arise via convergent series; require expressive systems \( \mathcal{F}_n \), \( n \geq 2 \). \\
Recursively Emergent & Defined via fixpoints or recursion schemes; level varies. \\
Limit-Constructed & Not definable in any single \( \mathcal{F}_n \); appear as limits over chains. \\
Axiom-Dependent & Require choice or non-constructive principles; excluded from model. \\
Chain-Variant & Chain-relative numbers; defined in some admissible chains only. \\
Fractal-Irrationals & Inexpressible at all lower levels; irrationality via definitional complexity. \\
\bottomrule
\end{tabular}
\caption{Emergent Classes of Real Numbers in the Fractal Framework}
\label{tab:stratified-classes}
\end{table}

\section{Fractal Cardinality and the Emergence of Intermediate Continua (CH Alternative)}

The classical continuum \( \mathbb{R} \) is postulated as a total, unstructured object of cardinality \( \mathfrak{c} \), admitting no internal gradation. The Continuum Hypothesis (CH) reflects this: it assumes that no cardinality lies strictly between \( \aleph_0 \) and \( \mathfrak{c} \).

In the stratified model, this binary view is replaced by a layered architecture of definability. Every constructive chain \( \{ \mathcal{F}_n \} \) defines only countably many real numbers. Yet, the space of all admissible chains \( \mathbb{F}_\omega \) has cardinality \( \mathfrak{c} \) (see Theorem~\ref{thm:Fomega-continuity}). By exploring which numbers emerge at level \( n \) across all such chains, we define a natural hierarchy of intermediate continua.

\subsection*{Local vs. Global Definability: Collapse and Separation}

We now formalize the difference between level-wise definability in a fixed chain and the cumulative definability across all chains. The key distinction lies in the interaction between stratification and chain aggregation.

\begin{definition}[Chain-Level Stratified Definability]
Let \( C \in \mathbb{F}_\omega \) be a fixed admissible definability chain
\[
    C = \{ \mathcal{F}_0^{(C)}, \mathcal{F}_1^{(C)}, \mathcal{F}_2^{(C)}, \ldots \}.
\]
Define the local definability classes:
\[
    \mathbb{R}_C^{(n)} := \mathbb{R}_{\mathcal{F}_n^{(C)}}, \qquad
    \mathbb{R}_C^{[\leq n]} := \bigcup_{k=0}^{n} \mathbb{R}_{\mathcal{F}_k^{(C)}}.
\]
\end{definition}

\begin{lemma}[Collapse of Levels Within a Chain]
\label{lemma:collapse}
For any fixed chain \( C \in \mathbb{F}_\omega \) and any level \( n \in \mathbb{N} \), we have:
\[
    \mathbb{R}_C^{(n)} = \mathbb{R}_C^{[\leq n]}.
\]
\end{lemma}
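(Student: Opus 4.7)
The plan is to prove the equality by two inclusions, both of which reduce to a monotonicity property that is built into the definition of an admissible chain. The nontrivial direction is $\mathbb{R}_C^{[\leq n]} \subseteq \mathbb{R}_C^{(n)}$; the reverse inclusion is immediate because $\mathbb{R}_C^{(n)} = \mathbb{R}_{\mathcal{F}_n^{(C)}}$ appears as the term with index $k=n$ in the union defining $\mathbb{R}_C^{[\leq n]}$.

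For the nontrivial direction, I would first invoke the definition of admissible chain: since $C \in \mathbb{F}_\omega$, there exists a strictly increasing $f:\mathbb{N}\to\mathbb{N}$ such that $\mathcal{F}_k^{(C)} = \mathcal{F}_{f(k)}$ and
\[
    \mathbb{R}_{\mathcal{F}_{f(k)}} \subsetneq \mathbb{R}_{\mathcal{F}_{f(k+1)}} \qquad \text{for every } k \in \mathbb{N}.
\]
In particular, each strict inclusion is, a fortiori, an inclusion. Then by a finite induction on the gap $n-k$, I would conclude that $\mathbb{R}_{\mathcal{F}_k^{(C)}} \subseteq \mathbb{R}_{\mathcal{F}_n^{(C)}}$ whenever $k \leq n$. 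Taking the union over $k \in \{0,1,\dots,n\}$ then yields $\mathbb{R}_C^{[\leq n]} \subseteq \mathbb{R}_C^{(n)}$.

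There is essentially no serious obstacle here: the lemma is a direct structural consequence of the admissibility condition, which already encodes cumulative monotonicity of the definability classes along the chain. The one subtle point worth making explicit in the write-up is that the admissibility condition stipulates strict inclusions between \emph{consecutive} levels of the chain, while the statement of the lemma involves non-strict inclusions between arbitrary earlier levels and level $n$; this is handled simply by transitivity of $\subseteq$, but should be stated clearly to avoid confusion between $\subsetneq$ (required for admissibility of the stratification) and $\subseteq$ (used for the collapse). After this, the equality $\mathbb{R}_C^{(n)} = \mathbb{R}_C^{[\leq n]}$ follows by combining both inclusions, confirming that within a single chain the cumulative closure up to stage $n$ adds nothing beyond what is already definable at stage $n$ itself.
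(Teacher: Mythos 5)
Your proof is correct and follows essentially the same route as the paper's: the nested inclusions $\mathbb{R}_{\mathcal{F}_0^{(C)}} \subseteq \cdots \subseteq \mathbb{R}_{\mathcal{F}_n^{(C)}}$ guaranteed by admissibility, together with absorption of the finite union into the top term. If anything, your version is slightly more careful, since you derive the nesting directly from the admissibility condition on the sets $\mathbb{R}_{\mathcal{F}_{f(k)}}$ (and note the $\subsetneq$ vs.\ $\subseteq$ distinction), whereas the paper passes through an inclusion of the formal systems themselves.
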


\begin{proof}
By construction, every admissible chain is strictly increasing in definitional power:
\[
    \mathcal{F}_0^{(C)} \subseteq \mathcal{F}_1^{(C)} \subseteq \cdots \subseteq \mathcal{F}_n^{(C)}.
\]
Hence, definable sets are nested:
\[
    \mathbb{R}_{\mathcal{F}_0^{(C)}} \subseteq \mathbb{R}_{\mathcal{F}_1^{(C)}} \subseteq \cdots \subseteq \mathbb{R}_{\mathcal{F}_n^{(C)}},
\]
and the union of all previous levels is absorbed into the top level:
\[
    \mathbb{R}_C^{[\leq n]} = \bigcup_{k=0}^{n} \mathbb{R}_{\mathcal{F}_k^{(C)}} = \mathbb{R}_{\mathcal{F}_n^{(C)}} = \mathbb{R}_C^{(n)}.
\]
\end{proof}

\begin{definition}[Global Stratified Definability]
We define the globally aggregated definability layers as:
\[
    \mathbb{R}^{(n)} := \bigcup_{C \in \mathbb{F}_\omega} \mathbb{R}_C^{(n)} = \bigcup_{C \in \mathbb{F}_\omega} \mathbb{R}_{\mathcal{F}_n^{(C)}},
\]
\[
    \mathbb{R}^{[\leq n]} := \bigcup_{C \in \mathbb{F}_\omega} \mathbb{R}_C^{[\leq n]} = \bigcup_{C \in \mathbb{F}_\omega} \bigcup_{k=0}^n \mathbb{R}_{\mathcal{F}_k^{(C)}}.
\]
\end{definition}

\begin{theorem}[Level Collapse under Admissible Chains]
\label{thm:admissible-collapse}
Let all chains \( C \in \mathbb{F}_\omega \) be admissible. Then for every \( n \in \mathbb{N} \), the global definability level at \( n \) coincides with the cumulative definability below and up to \( n \):
\[
\mathbb{R}^{(n)} = \mathbb{R}^{[\leq n]}.
\]
\end{theorem}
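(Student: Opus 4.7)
The plan is to reduce the global equality to the chain-local equality already established in Lemma~\ref{lemma:collapse}, and then lift it by taking the union over $C \in \mathbb{F}_\omega$. The key observation is that both sides of the target equality are defined as a union of the form $\bigcup_{C \in \mathbb{F}_\omega} X_C$, where $X_C$ is $\mathbb{R}_C^{(n)}$ on the left and $\mathbb{R}_C^{[\leq n]}$ on the right. If the chain-indexed families agree pointwise, their unions coincide.

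First I would unfold the definitions of $\mathbb{R}^{(n)}$ and $\mathbb{R}^{[\leq n]}$ to write both sides explicitly as double unions, one indexed by chains and the other by levels. Then I would fix an arbitrary admissible chain $C \in \mathbb{F}_\omega$ and invoke Lemma~\ref{lemma:collapse}, which yields the local identity $\mathbb{R}_C^{(n)} = \mathbb{R}_C^{[\leq n]}$. Since this holds for \emph{every} $C$, the indexed families $\{\mathbb{R}_C^{(n)}\}_{C \in \mathbb{F}_\omega}$ and $\{\mathbb{R}_C^{[\leq n]}\}_{C \in \mathbb{F}_\omega}$ are equal term by term, so their unions over $\mathbb{F}_\omega$ coincide, giving $\mathbb{R}^{(n)} = \mathbb{R}^{[\leq n]}$.

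For clarity I would also record the two inclusions separately. The inclusion $\mathbb{R}^{(n)} \subseteq \mathbb{R}^{[\leq n]}$ is immediate from the definitions, since $\mathbb{R}_C^{(n)} = \mathbb{R}_{\mathcal{F}_n^{(C)}} \subseteq \bigcup_{k \leq n} \mathbb{R}_{\mathcal{F}_k^{(C)}} = \mathbb{R}_C^{[\leq n]}$ for each $C$. The reverse inclusion is the content of Lemma~\ref{lemma:collapse}: any $r \in \mathbb{R}_C^{[\leq n]}$ lies in $\mathbb{R}_{\mathcal{F}_k^{(C)}}$ for some $k \leq n$, and by admissibility the nesting $\mathbb{R}_{\mathcal{F}_k^{(C)}} \subseteq \mathbb{R}_{\mathcal{F}_n^{(C)}}$ places $r$ in $\mathbb{R}_C^{(n)}$, hence in $\mathbb{R}^{(n)}$ after taking the union over $C$.

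I do not expect a genuine obstacle here, since the theorem is essentially a formal lift of Lemma~\ref{lemma:collapse} along the union over $\mathbb{F}_\omega$. The only point worth being careful about is ensuring that admissibility is applied uniformly across all chains: the monotone nesting of definability sets within each chain is precisely what allows the local collapse to be chain-independent, and so the set-theoretic commutation of unions goes through without any additional hypothesis beyond the admissibility assumed in the theorem's statement.
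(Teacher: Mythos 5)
Your proposal is correct and follows essentially the same route as the paper: the paper likewise proves the two inclusions, with the nontrivial direction resting on the admissibility-induced nesting $\mathbb{R}_{\mathcal{F}_k^{(C)}} \subseteq \mathbb{R}_{\mathcal{F}_n^{(C)}}$ within each chain before taking the union over $C \in \mathbb{F}_\omega$. Your explicit packaging of this step as a pointwise application of Lemma~\ref{lemma:collapse} followed by commutation of unions is only a cosmetic difference from the paper's inline argument.
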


\begin{proof}
We prove both inclusions.

\textbf{(1) \( \mathbb{R}^{(n)} \supseteq \mathbb{R}^{[\leq n]} \):}  
Let \( r \in \mathbb{R}^{(k)} \) for some \( k \leq n \). Then there exists a chain \( C \in \mathbb{F}_\omega \) such that \( r \in \mathbb{R}_{\mathcal{F}_k^{(C)}} \). Since \( C \) is admissible, we have \( \mathcal{F}_k^{(C)} \subseteq \mathcal{F}_n^{(C)} \), and hence:
\[
r \in \mathbb{R}_{\mathcal{F}_n^{(C)}} \subseteq \mathbb{R}^{(n)}.
\]
This proves \( \mathbb{R}^{[\leq n]} \subseteq \mathbb{R}^{(n)} \).

\textbf{(2) \( \mathbb{R}^{(n)} \subseteq \mathbb{R}^{[\leq n]} \):}  
Let \( r \in \mathbb{R}^{(n)} \), so there exists \( C \in \mathbb{F}_\omega \) such that \( r \in \mathbb{R}_{\mathcal{F}_n^{(C)}} \). But then trivially \( r \in \bigcup_{k=0}^n \mathbb{R}_{\mathcal{F}_k^{(C)}} \subseteq \mathbb{R}^{[\leq n]} \).

Hence, \( \mathbb{R}^{(n)} = \mathbb{R}^{[\leq n]} \).
\end{proof}

\begin{corollary}
Under the admissible regime, the global definability layers do not stratify real numbers. Instead, each level \( n \) absorbs all numbers definable up to that point:
\[
\forall n \in \mathbb{N}, \quad \mathbb{R}^{(n)} = \bigcup_{k \leq n} \mathbb{R}^{(k)}.
\]
\end{corollary}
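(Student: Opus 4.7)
The plan is to derive the corollary as a direct reformulation of Theorem~\ref{thm:admissible-collapse} together with a routine commutation of unions. The theorem already supplies the substantive content — that taking definability at level $n$ globally across all admissible chains absorbs everything definable at strictly lower levels — so the remaining task is notational: to show that $\mathbb{R}^{[\leq n]}$, as defined by an outer union over chains $C \in \mathbb{F}_\omega$ of an inner union over levels $k \leq n$, coincides with the union $\bigcup_{k \leq n} \mathbb{R}^{(k)}$.

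First I would invoke $\mathbb{R}^{(n)} = \mathbb{R}^{[\leq n]}$ from Theorem~\ref{thm:admissible-collapse}. Then I would unfold the definition
\[
\mathbb{R}^{[\leq n]} = \bigcup_{C \in \mathbb{F}_\omega} \bigcup_{k=0}^{n} \mathbb{R}_{\mathcal{F}_k^{(C)}},
\]
and swap the order of the two unions, which is valid without any choice principle since both index sets are given in advance. Reading the inner union after the swap as the defining expression for $\mathbb{R}^{(k)}$ yields $\bigcup_{k \leq n} \mathbb{R}^{(k)}$, completing the chain of equalities. Both inclusions are in fact obvious once the definitions are unfolded: any $r \in \mathbb{R}^{(n)}$ is witnessed by some $C$ at level $n$, which places it in $\mathbb{R}^{(n)} \subseteq \bigcup_{k \leq n} \mathbb{R}^{(k)}$, and conversely any $r \in \mathbb{R}^{(k)}$ for $k \leq n$ is promoted to level $n$ by the same chain using Lemma~\ref{lemma:collapse}.

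The main (minor) conceptual point worth flagging is that one must not conflate the \emph{intra-chain} nesting exploited in Lemma~\ref{lemma:collapse} — which requires fixing $C$ and using admissibility of that particular chain — with the \emph{inter-chain} aggregation in $\mathbb{R}^{(k)}$, which re-selects a chain independently at each level. Because the union swap preserves the chain parameter $C$ throughout, no fresh appeal to admissibility beyond what Theorem~\ref{thm:admissible-collapse} already uses is required, and the corollary is obtained without further analytic work.
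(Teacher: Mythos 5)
Your proposal is correct and matches the paper's (implicit) argument: the corollary is just Theorem~\ref{thm:admissible-collapse} combined with the definitional identity \( \mathbb{R}^{[\leq n]} = \bigcup_{k \leq n} \mathbb{R}^{(k)} \) obtained by interchanging the unions over chains and levels, with the only substantive inclusion \( \bigcup_{k \leq n} \mathbb{R}^{(k)} \subseteq \mathbb{R}^{(n)} \) coming from the same within-chain promotion via admissibility that the theorem's proof already uses. Your remark distinguishing intra-chain nesting from inter-chain aggregation is a sensible clarification but introduces nothing beyond what the paper's proof of the theorem supplies.
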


\subsection*{Clarifying the Role of Admissibility}

\paragraph{Non-Admissible Construction.}
The theorem \emph{Global Failure of Level Collapse} presented below explicitly relies on a setting where the chain \( \{ \mathcal{F}_n \} \) is not admissible: we allow
\[
\mathcal{F}_0 \not\subseteq \mathcal{F}_1,
\]
and in particular, some axioms available at lower levels may not persist to higher levels within the same chain. This violates the admissibility condition defined earlier, where each chain must exhibit strictly increasing definitional power.

\paragraph{Why Include This Case?}
Despite being non-admissible, such constructions illustrate a crucial point: definability can be path-dependent and sensitive to the structure of the chain. They model situations where information may be \emph{lost} when ascending levels, e.g., due to axiom omission or non-monotonic logic extensions. This serves as a formal analogy to \emph{epistemic instability} in non-monotonic or revision-based logical frameworks.

\paragraph{Admissible Case: Level Collapse Holds.}
In contrast, in any admissible chain \( \{ \mathcal{F}_n \} \), we have the collapse:
\[
\mathbb{R}_C^{[\leq n]} = \mathbb{R}_C^{(n)},
\]
as proved earlier. Hence, the global failure does not apply to admissible systems.

\begin{lemma}[Non-Derivability of Convergence in \( \mathsf{RCA}_0 \)]
\label{lemma:RCA0-incompleteness}
Let \( r := \sum_{k=0}^\infty 2^{-2k} \in \mathbb{R} \). Then the existence of a rational Cauchy sequence converging to \( r \) with a provable convergence modulus is not derivable in \( \mathsf{RCA}_0 \) alone.

That is,
\begin{align*}
    \mathsf{RCA}_0 \nvdash\ &\exists (q_n) \subseteq \mathbb{Q} \ \text{Cauchy sequence with limit } r \ \text{and modulus } \mu(n) \in \mathbb{N}, \\
    &\text{such that } \forall n, \forall m \geq \mu(n), \quad |q_{n+m} - q_n| < 2^{-n}.
\end{align*}
\end{lemma}

\begin{proof}[Sketch]
The system \( \mathsf{RCA}_0 \) permits basic recursive definitions and reasoning about computable functions, but does not include comprehension principles or bounding schemes strong enough to verify convergence of infinite series unless convergence is explicitly encoded.

Although \( r \) is computable (via a primitive recursive series), formal convergence requires a provable total modulus function \( \mu(n) \) such that:
\[
    \forall n, m \geq \mu(n) \quad |q_n - q_m| < 2^{-n}.
\]
Within \( \mathsf{RCA}_0 \), such a function cannot always be constructed or verified unless it is explicitly asserted. In particular, the comprehension schema available in \( \mathsf{RCA}_0 \) cannot define real numbers from general converging series unless an effective modulus is already part of the theory.

This fact is well known in the context of reverse mathematics: many convergence theorems (e.g., the Monotone Convergence Theorem, the completeness of \( \mathbb{R} \), and uniqueness of limits) require stronger systems such as \( \mathsf{ACA}_0 \) or \( \mathsf{WKL}_0 \).

Hence, the convergence of \( r \) as a real number with provable properties is not derivable without an added axiom \( \phi \) asserting it.

A full classification of such convergence principles in subsystems of second-order arithmetic can be found in~\cite{Simpson2009Subsystems}.
\end{proof}

\begin{theorem}[Global Failure of Level Collapse, Non-Admissible Case]
\label{thm:noncollapse}
There exists \( n \in \mathbb{N} \) and a real number \( r \in \mathbb{R} \) such that
\[
    r \in \mathbb{R}^{[\leq n]} \quad \text{but} \quad r \notin \mathbb{R}^{(n)}.
\]
That is, \( \mathbb{R}^{(n)} \subsetneq \mathbb{R}^{[\leq n]} \).
\end{theorem}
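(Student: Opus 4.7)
The plan is to construct a concrete non-admissible family of chains that takes the role of $\mathbb{F}_\omega$ in this theorem's statement, and inside it exhibit explicit $n$ and $r$ with $r \in \mathbb{R}^{[\leq n]}$ and $r \notin \mathbb{R}^{(n)}$. Since Theorem~\ref{thm:admissible-collapse} forbids such a strict inclusion on the admissible family of Definition~1.4, the "Clarifying the Role of Admissibility" paragraph above announces precisely this substitution: for the scope of this theorem, $\mathbb{F}_\omega$ denotes a non-admissible collection $\mathbb{F}_\omega^{\mathrm{na}}$ to be constructed.

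Take $n := 1$ and $r := \sum_{k=0}^{\infty} 2^{-2k}$, the series of Lemma~\ref{lemma:RCA0-incompleteness}, which gives $r \notin \mathbb{R}_{\mathsf{RCA}_0}$. Write $\phi_r$ for the axiom asserting that $\mu(n) := 2n$ is a modulus of convergence for the partial sums converging to $r$, so $r \in \mathbb{R}_{\mathsf{RCA}_0 + \phi_r}$. Now define the family: for each finite set $\Phi$ of such convergence-modulus axioms, let $C_\Phi$ be the chain with $\mathcal{F}_0^{(C_\Phi)} := \mathsf{RCA}_0 + \Phi$ and $\mathcal{F}_k^{(C_\Phi)} := \mathsf{RCA}_0$ for all $k \geq 1$. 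Whenever $\Phi \neq \emptyset$, the chain is non-admissible because $\mathcal{F}_0^{(C_\Phi)} \not\subseteq \mathcal{F}_1^{(C_\Phi)}$, exactly as the preceding paragraph anticipates. Set $\mathbb{F}_\omega^{\mathrm{na}} := \{ C_\Phi : \Phi \text{ finite}\}$, which in particular contains $C_{\{\phi_r\}}$.

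Now compare the two aggregates. Every $C \in \mathbb{F}_\omega^{\mathrm{na}}$ satisfies $\mathcal{F}_1^{(C)} = \mathsf{RCA}_0$, so
\[
\mathbb{R}^{(1)} \;=\; \bigcup_{C \in \mathbb{F}_\omega^{\mathrm{na}}} \mathbb{R}_{\mathcal{F}_1^{(C)}} \;=\; \mathbb{R}_{\mathsf{RCA}_0},
\]
and Lemma~\ref{lemma:RCA0-incompleteness} yields $r \notin \mathbb{R}^{(1)}$. On the other hand, via the chain $C_{\{\phi_r\}}$ we have $r \in \mathbb{R}_{\mathcal{F}_0^{(C_{\{\phi_r\}})}} \subseteq \mathbb{R}^{[\leq 1]}$. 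This gives the required strict inclusion.

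The main obstacle is interpretive rather than combinatorial, and it is the same issue that undermined a naive reading of the statement: the union $\bigcup_{C \in \mathbb{F}_\omega}$ ranges over whichever class we declare $\mathbb{F}_\omega$ to denote. If that class is permitted to contain chains whose level-$n$ layer is any constructive system whatsoever (in particular $\mathsf{RCA}_0 + \phi_r$), then $r$ re-enters $\mathbb{R}^{(n)}$ trivially and the strict inclusion collapses. The theorem is therefore only well-posed under the substitution announced beforehand, namely $\mathbb{F}_\omega \rightsquigarrow \mathbb{F}_\omega^{\mathrm{na}}$, in which all chains share a common weak ceiling at level $n$ while their lower levels are free to exceed it. This asymmetry — axioms visible at level $0$ but dropped by level $n$ within the same chain — is precisely the "path-dependence" and "information loss" flagged in the preamble, and is what makes level collapse fail. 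The proof should state this restriction explicitly up front so that the role of the lemma (producing a single real outside the common ceiling) is isolated as the technical, rather than the philosophical, content.
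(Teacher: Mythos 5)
Your construction is, at its core, the same as the paper's: the same level $n=1$, the same witness real $r=\sum_k 2^{-2k}$, the same reliance on Lemma~\ref{lemma:RCA0-incompleteness}, and the same device of planting the convergence axiom at level $0$ of one chain while keeping level $n$ too weak to recover it. The difference is that you confront head-on the step the paper elides. The paper exhibits just two chains $C_1, C_2$, shows $r \notin \mathbb{R}_{\mathcal{F}_n^{(C_2)}}$, and then asserts $r \notin \mathbb{R}^{(n)} = \bigcup_{C} \mathbb{R}_{\mathcal{F}_n^{(C)}}$ --- a union over \emph{all} chains --- without ruling out a third chain whose level-$n$ system is $\mathsf{RCA}_0 + \phi$ (or anything stronger), which would immediately put $r$ back into $\mathbb{R}^{(n)}$. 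Your explicit restriction of the ambient family to $\mathbb{F}_\omega^{\mathrm{na}}$, in which every chain has the fixed ceiling $\mathcal{F}_1^{(C)} = \mathsf{RCA}_0$, is exactly the hypothesis needed to make the displayed equality $\mathbb{R}^{(1)} = \mathbb{R}_{\mathsf{RCA}_0}$ true and hence to make the theorem well-posed; the paper's proof is not valid as written without some such restriction (and it compounds the confusion by calling $C_1, C_2$ ``admissible'' even though $\mathcal{F}_0^{(C_1)} \not\subseteq \mathcal{F}_n^{(C_1)}$). So your version buys correctness of the quantification at the cost of changing what $\mathbb{F}_\omega$ denotes, which you rightly insist must be stated up front.

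One caveat applies equally to both arguments and you should not let it pass silently: $\sum_{k=0}^\infty 2^{-2k} = \tfrac{4}{3}$ is rational, and under the paper's own Definition~1.1 membership in $\mathbb{R}_{\mathcal{F}}$ requires only that \emph{some} rational Cauchy sequence with provable modulus converge to $r$ --- the constant sequence $q_n = \tfrac{4}{3}$ does so provably in $\mathsf{RCA}_0$. Lemma~\ref{lemma:RCA0-incompleteness} is therefore only tenable if definability is read as relative to a fixed presentation of $r$ (the series), which is not what the definition says. To make your argument airtight you should replace $r$ by a real whose \emph{every} presentation is beyond $\mathsf{RCA}_0$, e.g.\ a real coding a non-computable but $\emptyset'$-computable set, and adjust $\phi_r$ accordingly; the structure of your proof survives this substitution unchanged.
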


\begin{proof}
To ensure strictness of the inclusion, we construct two admissible definability chains \( C_1, C_2 \in \mathbb{F}_\omega \), and a computable real number \( r \), such that:
\begin{itemize}
    \item \( r \in \mathbb{R}_{\mathcal{F}_0^{(C_1)}} \subseteq \mathbb{R}_{C_1}^{[\leq n]} \subseteq \mathbb{R}^{[\leq n]} \);
    \item \( r \notin \mathbb{R}_{\mathcal{F}_n^{(C_2)}} \), provided that \( C_2 \) avoids a specific axiom \( \phi \);
    \item both chains reach the same level-\( n \) system: \( \mathcal{F}_n^{(C_1)} = \mathcal{F}_n^{(C_2)} =: \mathcal{F} \).
\end{itemize}

Let us define:
\[
    f(k) := 2k, \quad r := \sum_{k=0}^{\infty} 2^{-f(k)} = \sum_{k=0}^{\infty} 2^{-2k} = \frac{1}{1 - 1/4} = \tfrac{4}{3}.
\]
The series converges rapidly and defines a computable real number \( r \). However, the existence of a rational Cauchy sequence for \( r \) with provable modulus of convergence may not be derivable in weak base systems.

Let \( \phi \) be an axiom explicitly asserting convergence:
\[
    \phi := \text{``The real number \( r \) equals } \sum_{k=0}^\infty 2^{-2k} \text{ with a provable convergence modulus.''}
\]

Choose a fixed system \( \mathcal{F} := \mathsf{RCA}_0 + \psi \), where \( \psi \) is any sentence unrelated to the convergence of \( r \) (e.g., a statement about decidability of certain theories). Then:

- Define \( C_1 \in \mathbb{F}_\omega \) such that:
  \[
      \mathcal{F}_0^{(C_1)} := \mathsf{RCA}_0 + \phi, \quad \mathcal{F}_n^{(C_1)} := \mathcal{F}.
  \]
  Then \( r \in \mathbb{R}_{\mathcal{F}_0^{(C_1)}} \subseteq \mathbb{R}_{C_1}^{[\leq n]} \subseteq \mathbb{R}^{[\leq n]} \).

- Define \( C_2 \in \mathbb{F}_\omega \) such that:
  \[
      \mathcal{F}_k^{(C_2)} := \mathsf{RCA}_0 \text{ for all } k < n, \quad \mathcal{F}_n^{(C_2)} := \mathcal{F}.
  \]
  Since \( C_2 \) avoids \( \phi \), the system \( \mathcal{F}_n^{(C_2)} \) does not prove convergence of the defining series for \( r \). Hence \( r \notin \mathbb{R}_{\mathcal{F}_n^{(C_2)}} \).

It follows that \( r \notin \mathbb{R}^{(n)} = \bigcup_{C} \mathbb{R}_{\mathcal{F}_n^{(C)}} \), yet \( r \in \mathbb{R}^{[\leq n]} \) via chain \( C_1 \). Therefore:
\[
    r \in \mathbb{R}^{[\leq n]} \setminus \mathbb{R}^{(n)},
\]
which proves that the inclusion is strict.

The key point is that in the absence of \( \phi \), the system \( \mathcal{F} = \mathsf{RCA}_0 + \psi \) cannot derive the convergence of the defining series for \( r \). This is formalized in Lemma~\ref{lemma:RCA0-incompleteness}.
\end{proof}

\begin{remark}[Explicit Parameters and Construction Details]
To make the proof fully explicit and constructive, we clarify the following choices:

\begin{itemize}
    \item[\textbf{(a)}] \textbf{Choice of auxiliary axiom \( \psi \):} We may take
    \[
        \psi := \text{``Every \( \Sigma^0_1 \)-formula with parameters from \( \mathbb{N} \) is decidable''}.
    \]
    This ensures that \( \psi \) is independent of the convergence of the series defining \( r \), and hence cannot aid in its derivability.

    \item[\textbf{(b)}] \textbf{Choice of level \( n \):} We may explicitly set \( n := 1 \). Then:
    \[
        r \in \mathbb{R}^{[\leq 1]} \quad \text{via chain } C_1, \qquad r \notin \mathbb{R}^{(1)} \quad \text{via chain } C_2.
    \]

    \item[\textbf{(c)}] \textbf{Structure of intermediate systems:} The systems \( \mathcal{F}_k^{(C_i)} \) for \( 0 < k < n \) (i.e., \( k = 1 \) if \( n = 1 \)) can be taken as \( \mathsf{RCA}_0 \), or any fixed base system insufficient to prove the convergence of \( r \). This preserves admissibility and ensures monotonic growth of definability in both chains.
\end{itemize}
\end{remark}

\begin{remark}
The collapse of levels inside a single definability trajectory reflects the monotonic accumulation of knowledge. The failure of such collapse globally reflects the combinatorial independence of different definability paths. This dichotomy is essential for understanding the fractal stratification of the continuum.
\end{remark}

\paragraph{Working Assumption Moving Forward.}
For the remainder of this article, we restrict attention to chains satisfying admissibility. All sets \( \mathbb{R}^{[\leq n]} \) and stratified classes \( \mathbb{R}^{(n)} \) will be defined exclusively with respect to such admissible systems, unless stated otherwise. This guarantees coherent accumulation of definable reals and preserves the monotonic growth structure central to our stratified framework.

\section{Constructive Approximation to the Continuum}

\begin{theorem}[Monotonic Growth of Fractal Continua]
\label{thm:monotonic-growth}
For every \( n \in \mathbb{N} \), we have:
\[
\mathbb{R}^{[\leq n]} \subsetneq \mathbb{R}^{[\leq n+1]}, \quad \text{and} \quad \bigcup_{n\in\mathbb{N}} \mathbb{R}^{[\leq n]} = \mathbb{R}^{\mathbb{F}_\omega}.
\]
Moreover, the total cardinality satisfies:
\[
|\mathbb{R}^{\mathbb{F}_\omega}| = \mathfrak{c}.
\]
\end{theorem}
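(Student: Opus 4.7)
The plan is to dispatch the theorem in three essentially independent pieces: monotone strict inclusion, exhaustion of $\mathbb{R}^{\mathbb{F}_\omega}$, and the cardinality count. All three can be grounded in machinery already developed, in particular Theorem~\ref{thm:Fomega-continuity}, Lemma~\ref{lemma:collapse}, and the partial-$\pi$-digit Example from the prelude.

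First I would prove the inclusion $\mathbb{R}^{[\leq n]} \subseteq \mathbb{R}^{[\leq n+1]}$, which is immediate: unfolding the definition gives $\mathbb{R}^{[\leq n]} = \bigcup_{C \in \mathbb{F}_\omega} \bigcup_{k\leq n} \mathbb{R}_{\mathcal{F}_k^{(C)}}$, and the right-hand side simply enlarges the inner index range. For the strictness, I would fix $n$, pick any admissible chain $C$, and invoke the admissibility clause of Definition (Admissible Stratified Chain) together with the verification procedure $V$ from Step~3 of the proof of Theorem~\ref{thm:Fomega-continuity}: this produces a within-chain witness $w_{n+1} \in \mathbb{R}_{\mathcal{F}_{n+1}^{(C)}} \setminus \mathbb{R}_{\mathcal{F}_n^{(C)}}$. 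To lift this to the global statement, I would enumerate the (countable) family of length-$(n{+}1)$ admissible prefixes, note that each such prefix determines a countable class of reals, and diagonalize across their union to extract a real $w_{n+1}^{\ast}$ that is forced into $\mathbb{R}^{[\leq n+1]}$ by an explicit chain yet escapes every chain of depth $\leq n$.

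Second, the identity $\bigcup_n \mathbb{R}^{[\leq n]} = \mathbb{R}^{\mathbb{F}_\omega}$ is a routine interchange of unions:
\[
\bigcup_{n \in \mathbb{N}} \mathbb{R}^{[\leq n]} \;=\; \bigcup_{n}\bigcup_{C \in \mathbb{F}_\omega}\bigcup_{k \leq n} \mathbb{R}_{\mathcal{F}_k^{(C)}} \;=\; \bigcup_{C \in \mathbb{F}_\omega}\bigcup_{k \in \mathbb{N}} \mathbb{R}_{\mathcal{F}_k^{(C)}} \;=\; \mathbb{R}^{\mathbb{F}_\omega}.
\]
For the cardinality, I would split into the two standard bounds. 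The upper bound $|\mathbb{R}^{\mathbb{F}_\omega}| \leq \mathfrak{c}$ is free, as every element is an ordinary real. The lower bound $|\mathbb{R}^{\mathbb{F}_\omega}| \geq \mathfrak{c}$ uses the partial-$\pi$-digit Example: distinct infinite $A \neq B \subseteq \mathbb{N}$ yield admissible chains $\{\mathcal{F}_n^A\}$ whose closures differ, and coding each $A$ as the real with binary expansion $\chi_A$ shows that $\mathfrak{c}$-many pairwise distinct reals lie in $\mathbb{R}^{\mathbb{F}_\omega}$; alternatively, one can invoke Theorem~\ref{thm:Fomega-continuity} together with a mild pigeon-hole count to conclude directly.

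The hard part is the global strictness: a real that is strict at level $n{+}1$ within one admissible chain may well be captured at a lower level by another chain whose base system is already powerful enough to define it. The proposed diagonal across length-$(n{+}1)$ prefixes is the natural workaround, but making it effective --- i.e., certifying that the diagonal real is \emph{itself} definable at level $n{+}1$ of some admissible chain rather than merely existing externally --- requires the same decidability hypotheses on strict inclusion that Step~3 of Theorem~\ref{thm:Fomega-continuity} invoked, and is where I expect the bulk of the technical work to concentrate.
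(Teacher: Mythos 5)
Your treatment of the union identity and of the cardinality bound is essentially the paper's: the paper proves $\bigcup_n \mathbb{R}^{[\leq n]} = \mathbb{R}^{\mathbb{F}_\omega}$ by the same two trivial inclusions, and gets $|\mathbb{R}^{\mathbb{F}_\omega}| = \mathfrak{c}$ by building, for each $X \subseteq \mathbb{N}$, an explicit admissible chain $C_X$ whose low-level system carries an $X$-dependent axiom making $r_X = \sum_{k \in X} 2^{-(k+1)}$ definable, together with the $\aleph_0 \cdot \mathfrak{c}$ upper bound. One caution on your lower bound: the partial-$\pi$-digit example only gives $\mathfrak{c}$-many pairwise distinct \emph{closures} $\mathbb{R}_{\{\mathcal{F}_n^A\}}$, and that alone does not force $|\mathbb{R}^{\mathbb{F}_\omega}| \geq \mathfrak{c}$ (a countable set already has $\mathfrak{c}$-many subsets), so the ``mild pigeon-hole count'' does not work; you genuinely need the explicit coding of $\chi_A$ into a chain that defines $r_A$, which is what the paper's $C_X$ construction supplies.

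The genuine gap is exactly where you locate it, and your proposal does not close it. A within-chain witness $w_{n+1} \in \mathbb{R}_{\mathcal{F}_{n+1}^{(C)}} \setminus \mathbb{R}_{\mathcal{F}_n^{(C)}}$ says nothing about the global class $\mathbb{R}^{[\leq n]}$, and your diagonalization over the countably many admissible length-$(n{+}1)$ prefixes only produces a real lying \emph{outside} $\mathbb{R}^{[\leq n]}$; it gives no reason why that real should lie in $\mathbb{R}^{[\leq n+1]}$ rather than outside $\mathbb{R}^{\mathbb{F}_\omega}$ altogether, and membership in $\mathbb{R}^{[\leq n+1]}$ is precisely what strictness requires. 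You acknowledge this as ``the bulk of the technical work,'' but no mechanism is offered for certifying that the diagonal real is definable at level $n+1$ of some admissible chain. The paper avoids diagonalization entirely: it takes as witness the jump-coded real $r_n = \sum_{k} \chi_{\emptyset^{(n+1)}}(k)\, 2^{-(k+1)}$, which by the arithmetical hierarchy theorem lies in $\Delta^0_{n+2} \setminus \Delta^0_{n+1}$, is definable at level $n+1$ of an explicitly constructed chain of increasing comprehension strength, and—under the standing correspondence between level $k$ and arithmetic definability strength $\Delta^0_{k+1}$ (framework (A) of the appendix)—cannot be defined by any level-$\leq n$ system of any chain. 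To complete your route you would need an analogous upper bound on the definitional complexity available at levels $\leq n$, at which point the jump-witness argument is already available and simpler; without such a bound, the key membership $w_{n+1}^{\ast} \in \mathbb{R}^{[\leq n+1]}$ remains unproved and the strict inclusion does not follow.
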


\begin{proof}
\textbf{(1) Strict Monotonicity:}  
We construct a uniform definability chain \( C = \{ \mathcal{F}_n \} \in \mathbb{F}_\omega \) such that each level \(\mathcal{F}_n\) corresponds to a strictly increasing comprehension strength:

\begin{enumerate}
\item Fix an effective enumeration \(\{ \phi_i \}_{i \in \mathbb{N}}\) of arithmetical formulas.

\item Define the sequence of formal systems:
\[
\mathcal{F}_n := \mathsf{RCA}_0 + \{\phi_i \mid i \leq n\} + \mathrm{Con}(\mathsf{RCA}_0 + \{\phi_j \mid j < n\}).
\]

\item For each \( n \), define a real number:
\[
r_n := \sum_{k=0}^\infty \frac{\chi_{\emptyset^{(n+1)}}(k)}{2^{k+1}},
\]
where \( \chi_{\emptyset^{(n+1)}} \) is the characteristic function of the \( (n+1) \)-st Turing jump.

\item By the \emph{Arithmetical Hierarchy Theorem} \cite[Thm~III.2.2]{Soare2016}, we have:
\[
r_n \in \Delta^0_{n+2} \setminus \Delta^0_{n+1}.
\]

\item Therefore, \( r_n \) is not definable in any system \( \mathcal{F}_k^{(C')} \) for \( k \leq n \), regardless of \( C' \in \mathbb{F}_\omega \), but becomes definable at level \( n+1 \) in the chain \( C \). Hence,
\[
\mathbb{R}^{[\leq n]} \subsetneq \mathbb{R}^{[\leq n+1]}.
\]
\end{enumerate}

\textbf{(2) Cumulative Closure:}  
We establish:
\[
\bigcup_{n \in \mathbb{N}} \mathbb{R}^{[\leq n]} = \mathbb{R}^{\mathbb{F}_\omega}.
\]

\begin{itemize}
\item[\(\subseteq\)] Each \( \mathbb{R}^{[\leq n]} \subseteq \mathbb{R}^{\mathbb{F}_\omega} \) by definition.

\item[\(\supseteq\)] For any \( r \in \mathbb{R}^{\mathbb{F}_\omega} \), there exists a chain \( C \in \mathbb{F}_\omega \) and index \( k \) such that \( r \in \mathbb{R}_{\mathcal{F}_k^{(C)}} \subseteq \mathbb{R}^{[\leq k]} \).
\end{itemize}

\textbf{(3) Continuum Cardinality:}  
We construct an injective map from \( 2^{\mathbb{N}} \) into \( \mathbb{R}^{\mathbb{F}_\omega} \):

\begin{enumerate}
\item For each infinite binary sequence \( X \subseteq \mathbb{N} \), define a definability chain \( C_X = \{ \mathcal{F}_n^{(X)} \} \) as follows:
\[
\mathcal{F}_1^{(X)} := \mathsf{RCA}_0 + \forall k\, [\varphi_X(k)\downarrow \lor \varphi_X(k)\uparrow],
\]
where \( \varphi_X \) is an \( X \)-computable function defining the characteristic function \( \chi_X \).

\[
\mathcal{F}_{n+1}^{(X)} := \mathcal{F}_n^{(X)} + \mathrm{Con}(\mathcal{F}_n^{(X)}) + \text{“$\emptyset^{(n)}$ exists”}.
\]

\item Define the real number:
\[
r_X := \sum_{k \in X} \frac{1}{2^{k+1}} \in \mathbb{R}_{\mathcal{F}_1^{(X)}} \subseteq \mathbb{R}^{[\leq 1]} \subseteq \mathbb{R}^{\mathbb{F}_\omega}.
\]

\item \emph{Admissibility verification:}
\begin{itemize}
    \item \textbf{Strict growth:} By Gödel's Second Incompleteness Theorem,
    \[
    \mathcal{F}_{n+1}^{(X)} \not\vdash \mathrm{Con}(\mathcal{F}_n^{(X)}).
    \]
    \item \textbf{Effective verification:} The inclusion \( \mathbb{R}_{\mathcal{F}_n^{(X)}} \subsetneq \mathbb{R}_{\mathcal{F}_{n+1}^{(X)}} \) is checkable in \( \mathsf{ACA}_0 \) via:
    \[
    \exists \psi\, [\mathcal{F}_{n+1}^{(X)} \vdash \psi \land \forall \phi\, (\mathcal{F}_n^{(X)} \vdash \phi \Rightarrow \psi \neq \phi)].
    \]
\end{itemize}

\item \emph{Cardinality preservation:}
\begin{itemize}
    \item Different \( X \) yield distinct \( r_X \).
    \item The mapping \( X \mapsto C_X \mapsto r_X \) is injective.
    \item All \( C_X \in \mathbb{F}_\omega \) by admissibility verification above.
\end{itemize}

\item The inequality \( |\mathbb{R}^{\mathbb{F}_\omega}| \leq \mathfrak{c} \) follows from:
\begin{itemize}
    \item Each real is defined by finite syntax in some \( \mathcal{F}_n^{(C)} \).
    \item There are countably many definitions per chain.
    \item The set \( \mathbb{F}_\omega \) has cardinality \( \mathfrak{c} \).
\end{itemize}
Hence \( |\mathbb{R}^{\mathbb{F}_\omega}| \leq \aleph_0 \cdot \mathfrak{c} = \mathfrak{c} \), and we conclude:
\[
|\mathbb{R}^{\mathbb{F}_\omega}| = \mathfrak{c}.
\]
\end{enumerate}
\end{proof}

\section{Stratified Alternative to the Continuum Hypothesis}

The classical Continuum Hypothesis (CH) asks whether there exists a cardinality strictly between \( \aleph_0 \) and \( \mathfrak{c} \). In our stratified framework, this binary perspective is replaced by a transfinite progression of definability thresholds. The continuum no longer appears as a single, structureless entity, but as the limit of a layered process of formal expressibility.

\begin{definition}[Stratified Cardinal Sequence]
For each \( n \in \mathbb{N} \), define:
\[
    \mathbb{R}^{[\leq n]} := \bigcup_{k = 0}^{n} \mathbb{R}^{(k)}, \qquad
    \kappa_n := \left| \mathbb{R}^{[\leq n]} \right|.
\]
This yields a strictly increasing sequence of cardinals:
\[
    \kappa_0 < \kappa_1 < \cdots < \kappa_n < \cdots < \kappa_\omega := \left| \mathbb{R}^{\mathbb{F}_\omega} \right| = \mathfrak{c}.
\]
\end{definition}

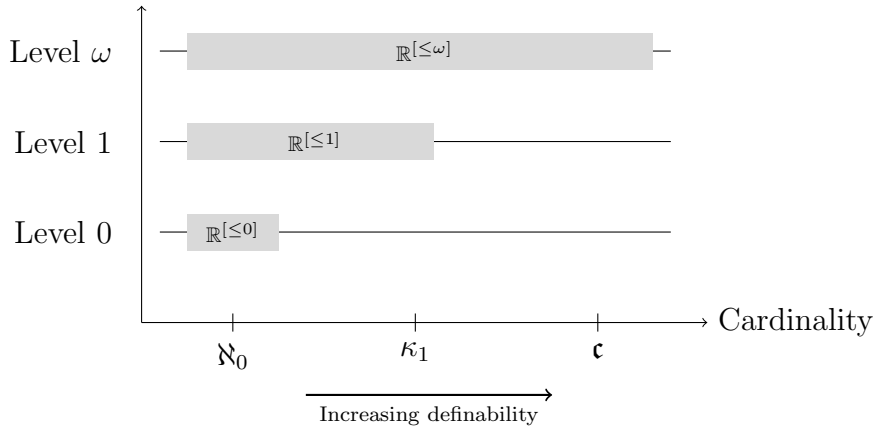
\begin{figure}[ht]
\centering
\begin{tikzpicture}[scale=1.2]
  \draw[->] (0,0) -- (6.2,0) node[right] {Cardinality};
  \draw[->] (0,0) -- (0,3.5);

  \foreach \x/\label in {1/{$\aleph_0$}, 3/{$\kappa_1$}, 5/{$\mathfrak{c}$}} {
    \draw (\x,0.1) -- (\x,-0.1) node[below] {\label};
  }

  \foreach \y/\lvl in {1/0, 2/1, 3/\omega} {
    \draw (0.2,\y) -- (5.8,\y);
    \node[left] at (-0.2,\y) {Level $\lvl$};
  }

  \fill[gray!30] (0.5,0.8) rectangle (1.5,1.2);
  \node at (1.0,1.0) {\scriptsize $\mathbb{R}^{[\leq 0]}$};

  \fill[gray!30] (0.5,1.8) rectangle (3.2,2.2);
  \node at (1.9,2.0) {\scriptsize $\mathbb{R}^{[\leq 1]}$};

  \fill[gray!30] (0.5,2.8) rectangle (5.6,3.2);
  \node at (3.1,3.0) {\scriptsize $\mathbb{R}^{[\leq \omega]}$};

  \draw[->, thick] (1.8,-0.8) -- (4.5,-0.8) node[below, midway] {\scriptsize Increasing definability};
\end{tikzpicture}
\caption{Stratified view of the continuum: each level adds new reals while preserving previous definability classes. The final stage $\mathbb{R}^{[\leq \omega]}$ reaches full cardinality $\mathfrak{c}$.}
\label{fig:cardinal-stratification}
\end{figure}

\begin{theorem}[CH Reinterpreted via Stratification]
\label{thm:stratified-ch}
Within the stratified framework, the classical CH is reinterpreted as the question: 
\[
    \text{“Is there a finite \( n \) such that } \kappa_n = \mathfrak{c} \text{?”}
\]
The answer is negative. The continuum \( \mathfrak{c} \) does not appear at any finite stage but only as the limit of definability layers:
\[
    \lim_{n \to \infty} \kappa_n = \mathfrak{c}, \quad \text{and} \quad \forall n,\ \kappa_n < \mathfrak{c}.
\]
\end{theorem}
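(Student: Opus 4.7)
The plan is to deduce both assertions directly from what has already been established, chiefly Theorem~\ref{thm:monotonic-growth}, together with a cardinality count at each finite stage. I would split the argument into two parts mirroring the two conclusions: (i) the strict inequality $\kappa_n < \mathfrak{c}$ at every finite $n$, and (ii) the identification of the limit with $\mathfrak{c}$.

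For part (i), the key observation is that only countably many constructive formal systems exist, since each $\mathcal{F}_i$ is encoded by a finite syntactic description in the enumeration $\{\mathcal{F}_i\}_{i\in\mathbb{N}}$ fixed in the prelude. Consequently, as $C$ ranges over $\mathbb{F}_\omega$, the level-$n$ entry $\mathcal{F}_n^{(C)}$ takes values in this countable pool, and each such system defines only countably many reals (since its language has countable syntax). Hence $\mathbb{R}^{(n)} = \bigcup_{C \in \mathbb{F}_\omega} \mathbb{R}_{\mathcal{F}_n^{(C)}}$ collapses to a countable union of countable sets, and the finite union $\mathbb{R}^{[\leq n]} = \bigcup_{k \leq n} \mathbb{R}^{(k)}$ remains countable. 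This gives $\kappa_n \leq \aleph_0 < \mathfrak{c}$, where the last strict inequality is Cantor's diagonal applied to $\{0,1\}^{\mathbb{N}}$.

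For part (ii), I would invoke the cumulative closure portion of Theorem~\ref{thm:monotonic-growth}: $\bigcup_{n\in\mathbb{N}} \mathbb{R}^{[\leq n]} = \mathbb{R}^{\mathbb{F}_\omega}$ with $|\mathbb{R}^{\mathbb{F}_\omega}| = \mathfrak{c}$, and the strict inclusions $\mathbb{R}^{[\leq n]} \subsetneq \mathbb{R}^{[\leq n+1]}$ ensure that the ascending chain reaches its full cardinality only in the colimit. Interpreting $\lim_{n\to\infty} \kappa_n$ as the cardinality of this directed union then yields $\mathfrak{c}$ at once, and the reinterpreted CH question---``is there a finite $n$ with $\kappa_n = \mathfrak{c}$?''---receives a negative answer immediately from part (i).

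The main obstacle is interpretive rather than technical: naively, if every $\kappa_n$ is countable, the pointwise supremum of the cardinal sequence is $\aleph_0$, not $\mathfrak{c}$, so the notation $\lim \kappa_n = \mathfrak{c}$ requires an explicit reading. I would therefore preface the proof with a brief clarification that $\lim$ here denotes the cardinality of the directed union, not the pointwise supremum of cardinals. Under this reading, strict set-inclusion across stages---rather than cardinal growth within a single stage---drives the transition from $\aleph_0$ to $\mathfrak{c}$, and the stratified CH receives a negative answer simply because this transition never occurs at any finite stage. This aligns with the article's overarching thesis that CH is displaced by cumulative definitional emergence rather than resolved by a single cardinal jump.
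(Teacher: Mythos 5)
The paper states this theorem without any accompanying proof, so there is no official argument to compare against; judged on its own terms, your proposal does not cohere into a proof. Your part (i) argues—correctly, if one takes the prelude's setup literally (a fixed countable enumeration of finitely encoded systems, each defining only countably many reals)—that every $\mathbb{R}^{(n)}$, and hence every $\mathbb{R}^{[\leq n]}$, is countable, so $\kappa_n \leq \aleph_0$. But then part (ii) cannot go through under any reading of the limit: $\mathbb{R}^{\mathbb{F}_\omega} = \bigcup_{n\in\mathbb{N}} \mathbb{R}^{[\leq n]}$ would be a countable union of countable, explicitly enumerated sets, hence countable, and its cardinality could not be $\mathfrak{c}$. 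Your interpretive move of reading $\lim_n \kappa_n$ as the cardinality of the directed union rather than as a supremum of cardinals does not rescue this, because the directed union is itself a union over countably many stages; indeed, even granting only $\kappa_n < \mathfrak{c}$ rather than countability, K\"onig's theorem ($\operatorname{cf}(2^{\aleph_0}) > \aleph_0$) forbids countably many stages each of size strictly below $\mathfrak{c}$ from unioning to a set of size $\mathfrak{c}$. So the two halves of your argument contradict each other: if part (i) is right, part (ii) is false, and conversely.

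The deeper issue is that part (i) also collides with the very result you invoke in part (ii). The proof of Theorem~\ref{thm:monotonic-growth} places the continuum-sized family $\{ r_X \mid X \subseteq \mathbb{N} \}$ inside $\mathbb{R}^{[\leq 1]}$, and Theorem~\ref{thm:delta-n-continuum} asserts $\bigl|\Delta_{n+1}^{\mathbb{F}_\omega}\bigr| = \mathfrak{c}$, which forces $\kappa_{n+1} = \mathfrak{c}$ at every finite stage—the opposite of both your part (i) and the theorem's claim $\forall n\, \kappa_n < \mathfrak{c}$. The source of the clash is that those proofs employ systems such as $\mathcal{F}_1^{(X)}$ indexed by arbitrary $X \subseteq \mathbb{N}$, which cannot all lie in the countable pool of finitely encoded systems on which your counting argument depends. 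Until that ambiguity is resolved (which notion of ``system'' and which definition of $\kappa_n$ is in force), no argument along your lines—and in fact no proof of the statement as literally formulated, with countably many stages all below $\mathfrak{c}$ whose limit attains $\mathfrak{c}$—can succeed.
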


\begin{remark}[Examples of Definability Thresholds]
Standard mathematical constants naturally fall into this hierarchy of definability:
\begin{itemize}
    \item \textbf{Level 0:} basic arithmetic numbers such as \( \mathbb{Q} \subset \mathbb{R}^{(0)} \);
    \item \textbf{Level 1:} computable reals, including classical constants like \( \pi \), \( e \), and \( \sqrt{2} \), all of which are Turing-computable;
    \item \textbf{Level \( n \geq 2 \):} non-computable reals such as Chaitin’s \( \Omega \), or reals whose definitions rely on convergence properties provable only in stronger formal systems (e.g., certain Diophantine limits).
\end{itemize}
This illustrates that in stratified analysis, it is definability—not cardinality—that governs the structure of the continuum.
\end{remark}

\begin{example}[Admissible Chain Capturing \( \Omega \)]
\label{ex:omega-chain}
Consider an admissible stratified chain \( \{\mathcal{F}_n\}_{n \in \mathbb{N}} \) where:
\begin{itemize}
    \item Each \( \mathcal{F}_n \) extends \( \mathrm{PA} \) with \( \Sigma^0_n \)-induction and can decide halting problems relative to \( \emptyset^{(n-1)} \);
    \item Theories are strictly increasing: \( \mathcal{F}_n \subsetneq \mathcal{F}_{n+1} \), e.g., via \( \Sigma^0_{n+1} \)-comprehension.
\end{itemize}

Define approximants to Chaitin's \( \Omega \) by:
\begin{enumerate}
    \item Let \( H_n := \{ p \mid \mathcal{F}_n \vdash \text{“$p$ halts in } \leq n \text{ steps”} \} \)
    \item Define \( \Omega_n := \sum_{p \in H_n} 2^{-|p|} \), so that:
    \begin{itemize}
        \item \( \Omega_n \in \mathbb{R}^{(n)} \) and is computable relative to \( \emptyset^{(n-1)} \);
        \item The sequence \( \{ \Omega_n \} \) is monotonic: \( \Omega_n \leq \Omega_{n+1} \);
        \item It converges: \( \lim_{n \to \infty} \Omega_n = \Omega \), since \( \mathbb{R}^{\mathbb{F}_\omega} \) includes all admissible chains of formal expressibility.
    \end{itemize}
\end{enumerate}

This illustrates three key phenomena:
\begin{itemize}
    \item \textbf{Layered Definability}: Each \( \Omega_n \) lies in \( \mathbb{R}^{(n)} \) but not in \( \mathbb{R}^{(n-1)} \);
    \item \textbf{Limit Capture}: \( \Omega \in \mathbb{R}^{\mathbb{F}_\omega} \setminus \bigcup_{n\in\mathbb{N}} \mathbb{R}^{(n)} \);
    \item \textbf{Non-uniformity}: No single \( \mathcal{F}_n \) can prove the value of any specific bit of \( \Omega \).
\end{itemize}
\end{example}

\begin{definition}[Stratified Regularity]
\label{def:stratified-regularity}
Within the framework of stratified definability, a cardinal \( \kappa_n := \left| \mathbb{R}^{[\leq n]} \right| \) is said to be \emph{regular} if it cannot be expressed as a union of fewer than \( \kappa_n \) sets, each of strictly smaller cardinality, drawn from definability layers of rank less than \( n \).

This internal notion parallels the classical concept of regularity but is interpreted constructively through the closure properties of definability strata.
\end{definition}

\begin{example}
Let \( \mathbb{R}^{[\leq n]} \) be the class of reals definable by procedures with at most \( n \) nested applications of the Turing jump. Then the corresponding cardinal \( \kappa_n \) is stratified-regular: no collection of \((n{-}1)\)-definable sets, even taken in totality from preceding layers, can jointly define all elements of \( \mathbb{R}^{[\leq n]} \).
\end{example}

\begin{theorem}[Stratified Density and Limit Structure]
The sequence \( \{ \kappa_n \} \), where \( \kappa_n := \left| \mathbb{R}^{[\leq n]} \right| \), satisfies the following properties within the stratified framework:
\begin{itemize}
    \item Each \( \kappa_n \) is \emph{regular} in the sense of Definition~\ref{def:stratified-regularity}, forming a self-contained definability closure not decomposable via lower layers;

    \item The global continuum \( \mathfrak{c} := \left| \mathbb{R}^{\mathbb{F}_\omega} \right| \) emerges only as the limit of the definability hierarchy:
    \[
        \mathfrak{c} = \lim_{n \to \infty} \kappa_n,
    \]
    where the limit is taken over admissible definability extensions;

    \item There is no finite stage \( n \) such that \( \kappa_n = \mathfrak{c} \): the process of definability never stabilizes at any finite level.
\end{itemize}
\end{theorem}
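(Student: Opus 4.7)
The plan is to dispatch the three clauses in order, leveraging the admissible level collapse (Theorem~\ref{thm:admissible-collapse}), the monotonic growth theorem (Theorem~\ref{thm:monotonic-growth}), and the reinterpretation of CH from Theorem~\ref{thm:stratified-ch}. I would first verify stratified regularity of each $\kappa_n$ directly from Definition~\ref{def:stratified-regularity}. Since the admissibility assumption lets us identify $\mathbb{R}^{[\leq n]}$ with $\mathbb{R}^{(n)}$, and since Theorem~\ref{thm:monotonic-growth} supplies a witness real $r_n \in \mathbb{R}^{[\leq n]} \setminus \mathbb{R}^{[\leq n-1]}$ that first emerges at level $n$, any attempted decomposition of $\mathbb{R}^{[\leq n]}$ into a union of sets drawn from layers of rank $< n$ is contained in $\mathbb{R}^{[\leq n-1]}$ and therefore omits $r_n$. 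Hence no such decomposition is exhaustive, regardless of how many lower-layer components are admitted, and stratified regularity follows \emph{a fortiori} from the existence of rank-pure witnesses rather than from any cardinality count.

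Second, I would derive the limit identity by invoking the cumulative closure $\bigcup_{n\in\mathbb{N}} \mathbb{R}^{[\leq n]} = \mathbb{R}^{\mathbb{F}_\omega}$ established in Theorem~\ref{thm:monotonic-growth}. Taking cardinalities on both sides, and using that $\{\kappa_n\}$ is a non-decreasing $\omega$-sequence of cardinals, standard cardinal arithmetic for countable increasing unions gives
\[
    \mathfrak{c} \;=\; \bigl|\mathbb{R}^{\mathbb{F}_\omega}\bigr| \;=\; \Bigl|\bigcup_{n\in\mathbb{N}} \mathbb{R}^{[\leq n]}\Bigr| \;=\; \sup_{n\in\mathbb{N}} \kappa_n \;=\; \lim_{n\to\infty} \kappa_n,
\]
which is the required limit formula. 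This step is essentially bookkeeping once Theorem~\ref{thm:monotonic-growth} is in hand.

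The hard part will be the third clause: no finite $n$ satisfies $\kappa_n = \mathfrak{c}$. The obstacle is that the strict set-theoretic inclusion $\mathbb{R}^{[\leq n]} \subsetneq \mathbb{R}^{[\leq n+1]}$ supplied by Theorem~\ref{thm:monotonic-growth} does not, by itself, promote to a strict cardinal inequality $\kappa_n < \kappa_{n+1}$; a single rank-pure witness $r_n$ cannot distinguish cardinalities. To overcome this, I would upgrade the witness construction to a continuum-sized family. Relativizing the Turing-jump scheme $r_n := \sum_k \chi_{\emptyset^{(n+1)}}(k) \, 2^{-(k+1)}$ from the proof of Theorem~\ref{thm:monotonic-growth}, I would build, for each $X \subseteq \mathbb{N}$, a distinct real $r_n^X$ whose definability provably requires access to $\emptyset^{(n+1)}$, using the arithmetical hierarchy separation $\Delta^0_{n+2} \setminus \Delta^0_{n+1}$ to force $r_n^X \notin \mathbb{R}^{[\leq n]}$ for every $X$. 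One must then argue that the map $X \mapsto r_n^X$ remains injective modulo the level-$n$ closure, so that the quotient $\mathbb{R}^{[\leq n+1]} / \mathbb{R}^{[\leq n]}$ itself has continuum size while $\mathbb{R}^{[\leq n]}$ falls strictly below $\mathfrak{c}$. The delicate bookkeeping is the interplay between Theorem~\ref{thm:Fomega-continuity}, which already places $\mathfrak{c}$ chains at every rank, and the demand that only strictly fewer than $\mathfrak{c}$ of them contribute genuine reals at level $\leq n$; this is where the detailed hierarchy-theoretic estimate, rather than a soft counting argument, is indispensable.
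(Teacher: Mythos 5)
The paper itself states this theorem without any proof, so there is nothing to compare your plan against directly; judged on its own terms, your handling of the first two clauses is reasonable. Clause one follows, as you say, from the existence of a rank-pure witness \( r_n \in \mathbb{R}^{[\leq n]} \setminus \mathbb{R}^{[\leq n-1]} \): any union of sets drawn from layers of rank \( < n \) sits inside \( \mathbb{R}^{[\leq n-1]} \) and so omits \( r_n \), which is exactly the (rather trivializing) reading suggested by Definition~\ref{def:stratified-regularity} and the example following it. Clause two is indeed bookkeeping from the cumulative closure \( \bigcup_n \mathbb{R}^{[\leq n]} = \mathbb{R}^{\mathbb{F}_\omega} \) of Theorem~\ref{thm:monotonic-growth}, modulo the usual (mildly choice-flavoured) identity \( |\bigcup_n A_n| = \sup_n |A_n| \) for increasing countable unions.

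The genuine gap is in your third clause, and it is not merely "delicate bookkeeping": your strategy proves the wrong inequality. Relativizing the jump construction to produce, for every \( X \subseteq \mathbb{N} \), a real \( r_n^X \) definable at level \( n+1 \) but not at level \( \leq n \) yields a lower bound — it shows \( |\Delta_{n+1}^{\mathbb{F}_\omega}| = \mathfrak{c} \) and hence \( \kappa_{n+1} = \mathfrak{c} \) (this is precisely Theorem~\ref{thm:delta-n-continuum}), which refutes rather than supports the claim that no finite stage reaches \( \mathfrak{c} \). What the clause needs is an upper bound \( \kappa_n < \mathfrak{c} \), and no refinement of your construction can supply it, because \( \mathbb{R}^{[\leq n]} \) is defined as a union over all of \( \mathbb{F}_\omega \), i.e.\ over \( \mathfrak{c} \)-many chains: the paper's own proof of Theorem~\ref{thm:monotonic-growth} (Step 3) injects \( 2^{\mathbb{N}} \) into \( \mathbb{R}^{[\leq 1]} \) via \( X \mapsto r_X = \sum_{k \in X} 2^{-(k+1)} \), so \( \kappa_1 = \mathfrak{c} \) already, and since any definable real lies in some admissible chain whose level-\( 0 \) system defines it (compare the chain \( C_1 \) in Theorem~\ref{thm:noncollapse}), even \( \kappa_0 = \mathfrak{c} \). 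The tension you flag — that "only strictly fewer than \( \mathfrak{c} \)" chains should contribute reals at level \( \leq n \) — is therefore not an open estimate but a claim contradicted by the paper's own machinery. The only way to make the third clause true is to read \( \kappa_n \) per fixed chain, where each \( \mathbb{R}_{\mathcal{F}_n} \) is countable and \( \kappa_n = \aleph_0 < \mathfrak{c} \) trivially; but under that reading \( \sup_n \kappa_n = \aleph_0 \), so clause two fails. No single consistent interpretation of \( \kappa_n \) validates both clauses, and your proposal cannot close this gap because the gap lies in the statement itself.
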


\begin{remark}[Foundational Implications]
The stratified model replaces the cardinal jump of classical CH with a fine-grained spectrum of constructive expressibility. The continuum becomes not a static totality, but a transfinite unfolding of definitional depth — an infinite ascent through layers of meaning. Real numbers thus acquire internal genealogies, and mathematics becomes a stratified epistemic landscape rather than a Platonic snapshot.
\end{remark}

\subsection*{Cardinality of Incremental Layers}

\noindent\textbf{Convention.}
For notational clarity, we fix the convention:
\[
\Delta_n^{\mathbb{F}_\omega} := \mathbb{R}^{[\leq n]} \setminus \mathbb{R}^{[\leq n-1]}, \quad \text{with } \mathbb{R}^{[\leq -1]} := \emptyset.
\]

\begin{theorem}[Continuum Cardinality of Definability Increments]
\label{thm:delta-n-continuum}
For every fixed \( n \in \mathbb{N} \), the definability increment
\[
\Delta_{n+1}^{\mathbb{F}_\omega} := \mathbb{R}^{[\leq n+1]} \setminus \mathbb{R}^{[\leq n]}
\]
has cardinality \( \mathfrak{c} \). That is,
\[
\left| \Delta_{n+1}^{\mathbb{F}_\omega} \right| = \mathfrak{c}.
\]
\end{theorem}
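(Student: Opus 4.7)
The strategy splits into the trivial upper bound and a lower-bound injection from Cantor space. The upper bound is immediate: $\Delta_{n+1}^{\mathbb{F}_\omega} \subseteq \mathbb{R}^{\mathbb{F}_\omega}$, and Theorem~\ref{thm:monotonic-growth} already gives $|\mathbb{R}^{\mathbb{F}_\omega}| = \mathfrak{c}$, hence $|\Delta_{n+1}^{\mathbb{F}_\omega}| \leq \mathfrak{c}$. So the substantive work is to exhibit a continuum-sized family of reals that first become definable precisely at level $n+1$.

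For the lower bound I would adapt step~(3) of the proof of Theorem~\ref{thm:monotonic-growth}, shifting the scaffolding from level~$1$ up to level $n+1$. For each $X \in 2^{\mathbb{N}}$, build an admissible chain $C_X$ whose initial segment climbs the Turing jumps via
$\mathcal{F}_{k+1}^{(X)} := \mathcal{F}_k^{(X)} + \mathrm{Con}(\mathcal{F}_k^{(X)}) + \text{``}\emptyset^{(k+1)}\text{ exists''}$
for $k < n$, starting from $\mathcal{F}_0^{(X)} := \mathsf{RCA}_0$, with level $n+1$ further adjoining an axiom asserting the convergence of the rational series encoding
$r_X := \sum_{k=0}^{\infty} \chi_{\emptyset^{(n+1)}}(k)\,2^{-(2k+2)} + \sum_{k=0}^{\infty} X(k)\,2^{-(2k+3)}.$
The interleaved binary layout guarantees that $r_X$ uniquely decodes to the pair $(\emptyset^{(n+1)}, X)$, so simultaneously the map $X \mapsto r_X$ is injective and any rational-Cauchy description of $r_X$ reconstructs $\emptyset^{(n+1)} \in \Delta^0_{n+2} \setminus \Delta^0_{n+1}$.

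Verification proceeds in three checks. First, $r_X \in \mathbb{R}_{\mathcal{F}_{n+1}^{(X)}} \subseteq \mathbb{R}^{[\leq n+1]}$: admissibility of $C_X$ is inherited from the Gödel-incompleteness argument of Theorem~\ref{thm:monotonic-growth}, and a Cauchy witness with provable modulus is available once $\emptyset^{(n+1)}$ is accessible. Second, $r_X \notin \mathbb{R}^{[\leq n]}$: if $r_X$ were definable at some level $k \leq n$ in any admissible chain $C'$, the even-bit projection would recover $\emptyset^{(n+1)}$ at arithmetical complexity at most $\Delta^0_{n+1}$, contradicting the Arithmetical Hierarchy Theorem invoked in Theorem~\ref{thm:monotonic-growth}. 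Third, injectivity is immediate from the explicit decoding. Combining these yields an injection $2^{\mathbb{N}} \hookrightarrow \Delta_{n+1}^{\mathbb{F}_\omega}$ and hence the lower bound $\mathfrak{c}$.

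The main obstacle is the second check: the tacit identification of chain-level with arithmetical-hierarchy level. The claim that a level-$k$ system in an admissible chain cannot define a real of complexity beyond $\Delta^0_{k+1}$ is leaned upon in Theorem~\ref{thm:monotonic-growth} but never isolated as an explicit lemma, and without some such bound the argument that $\emptyset^{(n+1)}$ cannot appear at level $\leq n$ breaks. I would either import this alignment as a standing convention (as the paper implicitly does when it writes ``$r_n$ is not definable in any system $\mathcal{F}_k^{(C')}$ for $k \leq n$, regardless of $C'$'') or discharge it by a short Reverse-Mathematics lemma establishing $\mathbb{R}_{\mathcal{F}_k^{(C)}} \subseteq \Delta^0_{k+1}$ for the canonical admissible chains under consideration. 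Once this bound is in place, the three verifications close and we conclude $|\Delta_{n+1}^{\mathbb{F}_\omega}| = \mathfrak{c}$.
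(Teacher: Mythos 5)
Your overall strategy is the same as the paper's: the upper bound is dispatched via \( |\mathbb{R}^{\mathbb{F}_\omega}| = \mathfrak{c} \), and the lower bound comes from a continuum-sized family of reals encoded in binary expansions, made definable at level \( n+1 \) by adjoining an explicit bit-asserting/convergence axiom to the chain, and argued to be non-definable at levels \( \leq n \) because any rational-Cauchy description would reconstruct an object too complex for those levels. The differences are in the bookkeeping. The paper indexes its family by a class \( \mathcal{A} \) of computably enumerable sets lying strictly between \( \emptyset^{(n)} \) and \( \emptyset^{(n+1)} \) in Turing degree and simply stipulates (its Step 1) that members of \( \mathcal{A} \) are not definable in any \( \mathcal{F}_k \) with \( k \leq n \); non-definability of \( r_A \) (its Step 5) is then immediate from that stipulation. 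You instead index by all of \( 2^{\mathbb{N}} \) and interleave \( X \) with \( \chi_{\emptyset^{(n+1)}} \), which gives injectivity over the full Cantor space for free and replaces the stipulated non-definability with an appeal to the Arithmetical Hierarchy Theorem. Incidentally, this sidesteps a weak point of the paper's Step 1: there are only countably many c.e.\ sets, so the claim \( |\mathcal{A}| = \mathfrak{c} \) cannot hold as literally stated, whereas your indexing by arbitrary \( X \in 2^{\mathbb{N}} \) is unproblematic.

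The obstacle you flag in your second check is real, but it is not yours alone. Since \( \mathbb{R}^{[\leq n]} \) is a union over \emph{all} admissible chains, and admissibility only demands strictly increasing definability, nothing in the official definitions prevents a chain whose level-\( 0 \) system already asserts the bits of \( \emptyset^{(n+1)} \) (the paper itself builds such front-loaded chains in the proof of Theorem~\ref{thm:noncollapse}); hence the bound \( \mathbb{R}_{\mathcal{F}_k^{(C)}} \subseteq \Delta^0_{k+1} \) that your argument needs cannot hold for arbitrary admissible chains, only for a canonical or otherwise restricted subfamily, and with the unrestricted definition the increment \( \Delta_{n+1}^{\mathbb{F}_\omega} \) is not obviously nonempty at all. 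The paper's proof rests on exactly the same unformalized alignment, hidden in its Step 1 stipulation and in the phrase ``regardless of \( C' \)'' in Theorem~\ref{thm:monotonic-growth}. So your route matches the paper's, your interleaving is a genuine technical improvement for the injectivity and cardinality part, and the gap you identify must indeed be closed by restricting or re-grading the chains, exactly as you propose --- a defect you share with, but unlike the paper explicitly acknowledge about, the original argument.
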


\begin{proof}
We explicitly construct a continuum-sized family of real numbers, each definable at level \( n+1 \), but at no lower level.

\textbf{Step 1: Definition of the index class.}  
Let \( \mathcal{A} \subseteq \mathcal{P}(\mathbb{N}) \) be the class of all infinite computably enumerable (c.e.) subsets \( A \subseteq \mathbb{N} \) such that:
\begin{itemize}
    \item \( A \) is not definable in any formal system \( \mathcal{F}_k \), for \( k \leq n \);
    \item \( A \) is Turing reducible to \( \emptyset^{(n+1)} \), but not to \( \emptyset^{(n)} \).
\end{itemize}

By the classical jump hierarchy, such sets exist in abundance: their Turing degrees form a perfect set of size \( \mathfrak{c} \) (see e.g.\ \cite{Simpson2009Subsystems}). Thus, \( |\mathcal{A}| = \mathfrak{c} \).

\textbf{Step 2: Encoding real numbers.}  
For each \( A \in \mathcal{A} \), define a real number via binary characteristic encoding:
\[
r_A := \sum_{k \in A} 2^{-k}.
\]
As \( A \) is infinite and not ultimately periodic, each \( r_A \) is irrational and distinct for distinct \( A \).

\textbf{Step 3: Constructing the definability chain.}  
We define a stratified chain \( \{ \mathcal{F}_k^{(A)} \}_{k \in \mathbb{N}} \in \mathbb{F}_\omega \) by:
\[
\mathcal{F}_k^{(A)} := \mathcal{F}_k \text{ for } k \leq n, \quad
\mathcal{F}_{n+1}^{(A)} := \mathcal{F}_n + \phi_A,
\]
where \( \phi_A \) is the following computably enumerable axiom scheme:
\[
\phi_A := \left\{ \text{``Bit } d_k \text{ of } r \text{ equals } 1 \iff k \in A \text{''} \right\}_{k \in \mathbb{N}}.
\]
That is, the theory \( \mathcal{F}_{n+1}^{(A)} \) explicitly asserts the characteristic function of \( A \), allowing it to define the binary expansion of \( r_A \).

\textbf{Step 4: Definability at level \( n+1 \).}  
In \( \mathcal{F}_{n+1}^{(A)} \), the Cauchy sequence
\[
q_m := \sum_{\substack{k \in A \\ k \leq m}} 2^{-k}
\]
is provably convergent, with a computable modulus \( \mu(\varepsilon) := \min\{k \in \mathbb{N} \mid 2^{-k} < \varepsilon\} \), since the bits are explicitly fixed. Hence:
\[
r_A \in \mathbb{R}_{\mathcal{F}_{n+1}^{(A)}} \subseteq \mathbb{R}^{[\leq n+1]}.
\]

\textbf{Step 5: Non-definability below level \( n+1 \).}  
Suppose for contradiction that \( r_A \in \mathbb{R}^{[\leq n]} \). Then there exists some \( k \leq n \) such that \( \mathcal{F}_k \vdash \text{``}\{q_m\} \to r_A \text{ with modulus} \text{''} \). But any such proof must define the characteristic function of \( A \), which contradicts \( A \notin \mathrm{Def}(\mathcal{F}_k) \). Therefore:
\[
r_A \notin \mathbb{R}^{[\leq n]}.
\]

\textbf{Step 6: Injectivity and cardinality.}  
The map \( A \mapsto r_A \) is injective, as binary representations are distinct. Hence,
\[
\left| \left\{ r_A \mid A \in \mathcal{A} \right\} \right| = \mathfrak{c},
\quad \text{and} \quad
\left\{ r_A \right\} \subseteq \Delta_{n+1}^{\mathbb{F}_\omega}.
\]

\textbf{Step 7: Upper bound.}  
Since \( \Delta_{n+1}^{\mathbb{F}_\omega} \subseteq \mathbb{R}^{\mathbb{F}_\omega} \), and \( |\mathbb{R}^{\mathbb{F}_\omega}| = \mathfrak{c} \), we conclude:
\[
\left| \Delta_{n+1}^{\mathbb{F}_\omega} \right| = \mathfrak{c}.
\]
\end{proof}

\begin{theorem}[Self-Density of Definability Layers]
For every \( n \in \mathbb{N} \), the set \( \Delta_n^{\mathbb{F}_\omega} \) is dense in itself:
\[
\forall r \in \Delta_n^{\mathbb{F}_\omega},\ \forall \varepsilon > 0,\ \exists s \in \Delta_n^{\mathbb{F}_\omega}\setminus\{r\} \text{ such that } |r - s| < \varepsilon.
\]
\end{theorem}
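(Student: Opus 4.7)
The plan is to exploit translation-invariance of the definability layers under rational shifts. Given $r \in \Delta_n^{\mathbb{F}_\omega}$ and $\varepsilon > 0$, I would choose any nonzero rational $q$ with $|q| < \varepsilon$ and set $s := r + q$. It then suffices to verify (i) $s \in \mathbb{R}^{[\leq n]}$, (ii) $s \notin \mathbb{R}^{[\leq n-1]}$, and (iii) $s \neq r$. Condition (iii) is immediate from $q \neq 0$, and $|s - r| = |q| < \varepsilon$ delivers the required closeness, so all real content sits in (i) and (ii).

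For (i), since $r \in \mathbb{R}^{[\leq n]}$, there exist a chain $C \in \mathbb{F}_\omega$ and an index $k \leq n$ with $r \in \mathbb{R}_{\mathcal{F}_k^{(C)}}$. If $\{q_m\}$ is a rational Cauchy sequence with provable modulus witnessing $r$ in $\mathcal{F}_k^{(C)}$, then $\{q_m + q\}$ is a rational Cauchy sequence with the same modulus, provably convergent to $r + q = s$ in the same system, since addition of a rational constant is a syntactically trivial transformation realized inside any formal system containing rational arithmetic. Hence $s \in \mathbb{R}_{\mathcal{F}_k^{(C)}} \subseteq \mathbb{R}^{[\leq n]}$.

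For (ii), I would argue by contradiction: if $s \in \mathbb{R}^{[\leq n-1]}$, then applying the same translation-closure argument to the shift $-q$ would give $r = s - q \in \mathbb{R}^{[\leq n-1]}$, contradicting the hypothesis $r \in \Delta_n^{\mathbb{F}_\omega}$. This closure of each $\mathbb{R}^{[\leq m]}$ under addition of rationals is the structural core of the density result; cardinality alone (Theorem~\ref{thm:delta-n-continuum}) is insufficient, since a set of size $\mathfrak{c}$ can fail to be dense in itself (e.g.\ the Cantor set), so the algebraic closure property is genuinely doing the work.

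The main obstacle I anticipate is making this translation-invariance fully rigorous uniformly across the admissible class: one must verify that whenever $\mathcal{F}$ proves convergence of $\{q_m\}$ to $r$ with modulus $\mu$, it also proves convergence of $\{q_m + q\}$ to $r+q$ with the same $\mu$, and that this preservation respects the chain stratification (so that the level index $k$ does not inflate). Since every constructive system in the framework contains rational arithmetic and recognizes rational constants at the lowest level, this reduces to a routine syntactic observation, but it must be invoked explicitly. As a pleasant degenerate case, for $n = 0$ the set $\Delta_0^{\mathbb{F}_\omega}$ contains $\mathbb{Q}$ and rational shifts of rationals are rational, so density in self is immediate there and sets the template for the inductive perturbation used above.
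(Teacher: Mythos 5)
Your proof is correct in substance but takes a genuinely different route from the paper. The paper's own argument starts from a rational Cauchy sequence $\{q_k\}$ witnessing $r \in \mathbb{R}_{\mathcal{F}_n}$ and perturbs a \emph{single} term $q_k \mapsto q_k + \delta$, claiming the perturbed sequence has a limit $r' \neq r$ still in $\Delta_n^{\mathbb{F}_\omega}$; it then asserts, without a symmetric argument, that $r'$ is not definable at any lower level. Your approach instead shifts the \emph{whole} sequence by a fixed nonzero rational $q$ with $|q| < \varepsilon$, obtaining $s = r + q$, and handles the lower-level exclusion by the inverse-shift contradiction: if $s$ were in $\mathbb{R}^{[\leq n-1]}$ via some chain, then $r = s - q$ would be too. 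This buys you two real advantages. First, your perturbation genuinely changes the limit, whereas altering finitely many terms of a convergent sequence does not (the paper's $r'$ as literally constructed equals $r$), so your construction repairs the weakest step of the published sketch. Second, your exclusion from $\mathbb{R}^{[\leq n-1]}$ is a clean closure-plus-symmetry argument rather than the paper's unargued claim that the perturbation is ``not definable in any weaker system.'' The one assumption you lean on --- that every system in an admissible chain proves that adding a rational constant preserves Cauchyness, preserves the modulus, and shifts the limit accordingly --- is of the same mild kind the paper itself implicitly uses (provability of convergence of the modified sequence in $\mathcal{F}_n$), and is reasonable given that the framework requires each system to contain enough arithmetic to verify convergence moduli; stating it explicitly, as you do, is the right move. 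Your observation that cardinality alone cannot give density-in-itself is also apt and correctly identifies rational-translation closure as the structural core of the result.
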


\begin{proof}[Sketch of Proof]
Fix \( n \in \mathbb{N} \). Let \( r \in \Delta_n^{\mathbb{F}_\omega} \), so there exists an admissible definability chain \( \{\mathcal{F}_k\} \in \mathbb{F}_\omega \) such that:
\begin{itemize}
    \item \( r \in \mathbb{R}_{\mathcal{F}_n} \);
    \item \( r \notin \bigcup_{k < n} \mathbb{R}_{\mathcal{F}_k} \).
\end{itemize}

By definition of \( \mathbb{R}_{\mathcal{F}_n} \), there exists a rational sequence \( \{q_k\} \subset \mathbb{Q} \) and a provable modulus of convergence \( m: \mathbb{N} \to \mathbb{N} \) such that \( \mathcal{F}_n \vdash \text{“}\lim q_k = r \text{”} \).

Let \( \varepsilon > 0 \) be arbitrary. Choose an index \( N \in \mathbb{N} \) such that:
\[
\forall k, \ell \geq N,\quad |q_k - q_\ell| < \varepsilon/2,
\]
and fix some \( k > N \). Then define the perturbed rational sequence \( \{q_k'\} \) as:
\[
q_i' := 
\begin{cases}
q_i, & i \neq k, \\
q_k + \delta, & i = k,
\end{cases}
\]
where \( \delta \in \mathbb{Q} \) is a small rational with \( 0 < |\delta| < \varepsilon/4 \), chosen such that the limit \( r' := \lim q_k' \) differs from \( r \), i.e., \( r' \neq r \).

Because \( \delta \) is small and appears only at a single index, the convergence properties of the modified sequence remain provable in \( \mathcal{F}_n \). Hence:
\[
r' \in \mathbb{R}_{\mathcal{F}_n}.
\]

Moreover, since the perturbation is minor and not definable in any weaker system \( \mathcal{F}_k \), \( k < n \), the new real \( r' \notin \bigcup_{k < n} \mathbb{R}_{\mathcal{F}_k} \), and therefore:
\[
r' \in \Delta_n^{\mathbb{F}_\omega}, \quad r' \neq r, \quad \text{and} \quad |r - r'| < \varepsilon.
\]

Thus, for any \( r \in \Delta_n^{\mathbb{F}_\omega} \) and any \( \varepsilon > 0 \), there exists \( s := r' \in \Delta_n^{\mathbb{F}_\omega} \setminus \{r\} \) such that \( |r - s| < \varepsilon \), which proves that \( \Delta_n^{\mathbb{F}_\omega} \) is dense in itself.
\end{proof}

\paragraph{Stratified Separation of the Continuum.}
Unlike the classical continuum \( \mathbb{R} \), which appears as an undifferentiated totality of cardinality \( \mathfrak{c} \), the stratified model \( \mathbb{R}^{\mathbb{F}_\omega} \) permits a countable partition into definability increments:
\[
\mathbb{R}^{\mathbb{F}_\omega} = \bigcup_{n=0}^\infty \Delta_n^{\mathbb{F}_\omega}, \qquad \text{where } |\Delta_n^{\mathbb{F}_\omega}| = \mathfrak{c},\quad \Delta_n^{\mathbb{F}_\omega} \cap \Delta_m^{\mathbb{F}_\omega} = \emptyset \text{ for } n \neq m.
\]
Each \( \Delta_n^{\mathbb{F}_\omega} \) is not only uncountable but also dense-in-itself, forming an internally coherent continuum layer. Such a partition into disjoint dense subsets of continuum size is impossible in classical set theory, where any two dense \( \mathfrak{c} \)-sized subsets must intersect. Thus, the stratified framework enables a constructive decomposition of the continuum inaccessible to classical models.

\section{From Fractal Continuum to Classical Completion}

\subsection{Constructive Projection onto the Classical Interval}

The stratified architecture of \( \mathbb{R}^{\mathbb{F}_\omega} \) induces a canonical decomposition of the unit interval into distinct layers of definability complexity. This construction preserves the topological structure of \([0,1]\) while revealing its internal hierarchical organization.

\begin{definition}[Fractal Unit Interval]
The projection of the fractal continuum onto \([0,1]\) is defined as:
\[
[0,1]^{\mathbb{F}_\omega} := \bigsqcup_{n=0}^\infty \Delta_n^{[0,1]},
\]
where each definability layer is given by:
\[
\Delta_n^{[0,1]} := \Delta_n^{\mathbb{F}_\omega} \cap [0,1],
\]
with \( \mathbb{R}^{[\leq n]} \) denoting the reals constructively definable at level \( \leq n \).
\end{definition}

\begin{proposition}[Metamathematical Topology]
Each stratum \( \Delta_n^{[0,1]} \) satisfies:
\begin{itemize}
    \item \textbf{Cardinality:} \( |\Delta_n^{[0,1]}| = \mathfrak{c} \), witnessed by the injective coding:
    \[
    X \mapsto r_X := \sum_{k \in X} 2^{-k}, \quad r_X \in \Delta_n^{[0,1]},
    \]
    for appropriately chosen oracle sets \( X \).

    \item \textbf{Density:} For any \( x < y \in [0,1] \), the existence of some \( r \in \Delta_n^{[0,1]} \cap (x, y) \) is provable in the meta-theory, and such \( r \) is definable in \( \mathcal{F}_{n+1} \).

    \item \textbf{Metrical Disjointness:}
    \[
    \forall n \neq m,\quad \Delta_n^{[0,1]} \cap \Delta_m^{[0,1]} = \emptyset, \quad \text{and} \quad \bigcup_{n} \Delta_n^{[0,1]} = [0,1]^{\mathbb{F}_\omega}.
    \]
\end{itemize}
\end{proposition}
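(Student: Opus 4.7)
The plan is to verify the three clauses by importing the constructions already established for $\Delta_n^{\mathbb{F}_\omega}$ (Theorem~\ref{thm:delta-n-continuum} and the Self-Density theorem) and localizing them to the unit interval. Throughout, I will use the definitional identity $\Delta_n^{[0,1]} = \Delta_n^{\mathbb{F}_\omega} \cap [0,1]$ together with the decomposition $\mathbb{R}^{\mathbb{F}_\omega} = \bigsqcup_n \Delta_n^{\mathbb{F}_\omega}$ which follows from $\Delta_n^{\mathbb{F}_\omega} := \mathbb{R}^{[\leq n]} \setminus \mathbb{R}^{[\leq n-1]}$ and Theorem~\ref{thm:monotonic-growth}.

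For \textbf{cardinality}, I would reuse the continuum-sized family $\{r_A : A \in \mathcal{A}\}$ from Theorem~\ref{thm:delta-n-continuum}, but normalize the encoding to land in the unit interval: replace $r_A := \sum_{k \in A} 2^{-k}$ by $\tilde r_A := \sum_{k \in A} 2^{-(k+1)}$, which satisfies $\tilde r_A \in [0,1]$ and preserves both injectivity (distinct $A$ yield distinct binary expansions) and level-$n$ definability (the scaling by $\tfrac{1}{2}$ is uniformly definable in $\mathcal{F}_0$). This yields $|\Delta_n^{[0,1]}| \geq \mathfrak{c}$; the reverse bound is immediate from $\Delta_n^{[0,1]} \subseteq [0,1]$.

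For \textbf{density}, I would localize the perturbation argument. Given $x < y$ in $[0,1]$, pick rationals $a,b$ with $x < a < b < y$ and an integer $N$ with $2^{-N} < (b-a)$. For $A \in \mathcal{A}$ witnessing level-$n$ complexity as in Theorem~\ref{thm:delta-n-continuum}, set
\[
s := a + 2^{-N}\sum_{k \in A} 2^{-(k+1)}.
\]
Then $s \in (a,b) \subset (x,y)$, and $s$ is definable in the same chain $\mathcal{F}_{n+1}^{(A)}$ that defined $\tilde r_A$, since $a$, $N$, and the scaling are elementary. To rule out $s \in \mathbb{R}^{[\leq n-1]}$, note that any provable-modulus Cauchy definition of $s$ at a lower level would, by extracting the binary digits of $s - a$ shifted by $N$, yield a $\mathcal{F}_{n-1}$-definition of $\chi_A$, contradicting $A \notin \mathrm{Def}(\mathcal{F}_{n-1})$. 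Hence $s \in \Delta_n^{[0,1]} \cap (x,y)$.

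For \textbf{metrical disjointness} and exhaustion, both assertions are essentially formal. Disjointness $\Delta_n^{[0,1]} \cap \Delta_m^{[0,1]} = \emptyset$ for $n \neq m$ is inherited from the set-difference definition of $\Delta_n^{\mathbb{F}_\omega}$; and $\bigcup_n \Delta_n^{[0,1]} = [0,1]^{\mathbb{F}_\omega}$ is the very definition of the left-hand side together with Theorem~\ref{thm:monotonic-growth}, intersected with $[0,1]$.

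The main obstacle I anticipate is the density step, specifically the verification that the translated and scaled real $s$ sits exactly at level $n$ rather than slipping into a higher or lower stratum. Slipping upward is harmless for density but would falsify the claim that the witness lies in $\Delta_n^{[0,1]}$; slipping downward would contradict the non-definability of $A$ below level $n$. The delicate part is formalizing the reduction ``a $\mathcal{F}_{n-1}$-proof of convergence for $s$ recovers $\chi_A$'' as an effective procedure inside $\mathcal{F}_{n-1}$, so that the contradiction is genuinely derivable rather than merely metatheoretic. I expect this to be handled by a standard bit-extraction argument analogous to Step 5 of Theorem~\ref{thm:delta-n-continuum}, but it requires explicit attention to the provable modulus of the shifted series.
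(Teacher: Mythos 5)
Your proposal is correct and follows exactly the route the paper intends: the paper in fact states this proposition with no proof at all (the only hint is the coding \(X \mapsto r_X\) embedded in the statement), so your localization of Theorem~\ref{thm:delta-n-continuum}, the convention \(\Delta_n^{\mathbb{F}_\omega} = \mathbb{R}^{[\leq n]}\setminus\mathbb{R}^{[\leq n-1]}\), and Theorem~\ref{thm:monotonic-growth} to the unit interval is precisely the missing argument. Your normalization \(\tilde r_A=\sum_{k\in A}2^{-(k+1)}\) quietly repairs a defect in the proposition as printed (the displayed coding \(\sum_{k\in X}2^{-k}\) can exceed \(1\)), and your translation-plus-scaling density witness with the bit-extraction reduction is the direct analogue of Step 5 of Theorem~\ref{thm:delta-n-continuum} — no less rigorous than the paper's own handling of that step, and you correctly identify it as the delicate point. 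Two small caveats: Theorem~\ref{thm:delta-n-continuum} only covers increments \(\Delta_{n+1}^{\mathbb{F}_\omega}\), so the stratum \(n=0\) is not covered by the imported family and needs a separate (easy) remark, e.g.\ placing the digit-fixing axiom scheme \(\phi_A\) at level \(0\) of a chain; and your phrase that the density witness is definable ``in the chain \(\mathcal{F}_{n+1}^{(A)}\)'' while lying in \(\Delta_n^{[0,1]}\) inherits the paper's own index slippage (the proposition's ``definable in \(\mathcal{F}_{n+1}\)''), so you should state explicitly that the axiom scheme fixing the bits of \(A\) is placed at level \(n\) of the chain, ensuring \(s\in\mathbb{R}^{[\leq n]}\) before excluding \(\mathbb{R}^{[\leq n-1]}\).
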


\begin{theorem}[Definable vs Classical Continuum]
The decomposition \( \{ \Delta_n^{[0,1]} \}_{n \in \mathbb{N}} \) reveals a structural contrast:
\begin{itemize}
    \item In the \emph{fractal model}:
    \begin{itemize}
        \item All layers exist constructively without invoking choice
        \item Each real is effectively indexed by its definability level
    \end{itemize}
    
    \item In \emph{ZF set theory without choice}:
    \begin{itemize}
        \item No Borel or Lebesgue-measurable separation of dense \( \mathfrak{c} \)-subsets into disjoint layers is possible
        \item The definability grading is unobservable without external meta-theory
    \end{itemize}
\end{itemize}
\end{theorem}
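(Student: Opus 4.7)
The plan is to split the theorem into its two structural halves --- the positive (constructive) side realised inside the fractal model, and the negative (meta-theoretic) side inside ZF without choice --- and to argue each half in its own foundational setting, since they are not really contradictory statements but complementary observations about different universes.

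For the fractal side, I would simply orchestrate the constructions already established in the paper. Theorem~\ref{thm:Fomega-continuity} produces $\mathbb{F}_\omega$ purely syntactically, and each layer $\mathbb{R}^{[\leq n]}$ and increment $\Delta_n^{\mathbb{F}_\omega}$ is obtained by unions and set differences indexed by the effective enumeration of admissible chains, with no appeal to AC; Theorem~\ref{thm:delta-n-continuum} already gives $|\Delta_n^{\mathbb{F}_\omega}| = \mathfrak{c}$. Restriction to $[0,1]$ preserves all of this, so $\Delta_n^{[0,1]}$ inherits the constructive character. The effective indexing would follow from the Definition of Fractal Degree: given $r \in \mathbb{R}^{\mathbb{F}_\omega}$ presented as a Cauchy witness with modulus, one searches through the enumerated systems $\{\mathcal{F}_k\}$ for the least $k$ admitting a derivation of convergence; this procedure is uniformly recursive in the Gödel encoding of the chain and returns the unique $n$ with $r \in \Delta_n^{[0,1]}$, confirming that each real is effectively labelled by its level.

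For the ZF side I would run a two-pronged impossibility argument. First, suppose that each $\Delta_n^{[0,1]}$ were Borel with rank $\alpha_n$. Then membership would be expressible by a formula uniformly bounded in the Borel hierarchy, yet the same membership relation encodes \enquote{$r$ is first definable in $\mathcal{F}_n$ but not in $\mathcal{F}_{n-1}$}, a predicate whose complexity grows with the consistency strength of the chain. A Tarski-style undefinability argument applied to the ascending sequence $\{\mathcal{F}_n\}$ forces $\alpha_n$ to be unbounded, which rules out a uniform Borel scheme for the full partition. Second, suppose the $\Delta_n^{[0,1]}$ were Lebesgue measurable. Countable additivity on $[0,1] = \bigsqcup_n \Delta_n^{[0,1]}$ forces all but finitely many layers to have measure zero; combined with the fact that each layer is dense and of cardinality $\mathfrak{c}$ (and self-dense by the Self-Density theorem), applying the Lebesgue density theorem to any layer that would have to carry positive mass yields a contradiction with the definability obstruction from the first prong, since no single layer can be \enquote{large} in the measure sense while also being the stratum of a specific definability level.

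The hard part will be the second half, and in particular making rigorous the phrase \enquote{unobservable without external meta-theory}. The cleanest route I see is to pass to a Solovay-type model of ZF + DC in which every set of reals is Lebesgue measurable and has the Baire property; there, any hypothetical decomposition into disjoint dense $\mathfrak{c}$-sized measurable layers can be refuted by combining category with measure, since the Baire property forces exactly one layer to be comeager on some open interval while measure additivity forces all but finitely many to be null, and these two constraints cannot simultaneously accommodate the required density in every layer. The delicate methodological point is to keep the metalevels strictly separate --- one must argue what ZF proves about a hypothetical internal partition without ever invoking the external fractal description as a premise, so that the ZF-side conclusion is genuinely a statement about the classical continuum rather than a translation of the fractal one.
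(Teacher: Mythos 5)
The paper never proves this statement: in the source it is asserted as a ``theorem'' and is immediately followed by the philosophical remark, so there is no argument of the paper's to match yours against. Your proposal therefore has to stand on its own, and as it stands the ZF-side half contains genuine errors. First, the measure step is wrong: countable additivity on \( [0,1] = \bigsqcup_n \Delta_n^{[0,1]} \) only forces the measures to sum to \(1\); it does not force all but finitely many layers to be null (the masses could be \(2^{-n-1}\)), so the alleged tension with density never materialises. Second, and more fundamentally, the impossibility you are trying to derive cannot follow from measure, category, density and cardinality alone, because ZF (no choice needed) already exhibits partitions of \([0,1]\) into countably many pairwise disjoint, dense, Borel sets each of cardinality \(\mathfrak{c}\) — for instance, classify reals by the asymptotic density of the digit \(1\) in their binary expansion, a tail-invariant Borel condition, with one residual class for reals whose density does not exist. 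Every layer is dense (membership is unchanged by altering finitely many digits), Borel, of size \(\mathfrak{c}\), and the layers are disjoint and exhaustive. Hence any correct argument must use specific properties of the particular sets \(\Delta_n^{[0,1]}\) (e.g.\ establish that some \(\Delta_n^{[0,1]}\) is not Borel or not measurable); your Tarski-style prong does not do this, since a family of Borel sets with unbounded Borel ranks is perfectly consistent, and passing to a Solovay-type model both changes the ambient theory (ZF+DC from an inaccessible, not plain ZF) and runs into the same explicit Borel counterexample.

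On the fractal side, your claim that each real is \emph{effectively} indexed by its level is also too strong: to find the least \(k\) with \(r \in \mathbb{R}_{\mathcal{F}_k}\) one must certify non-derivability of a convergence witness in all lower systems, which is not a recursive search (provability is only \(\Sigma^0_1\)), and the paper's own lemma on the compression complexity of \(\emptyset'\)-summation asserts precisely that computing the minimal pair \((n,\sigma)\) is \(\Sigma^0_1\)-hard and not computable relative to any \(\mathcal{F}_n\). So the constructive half should be weakened to ``each real has a well-defined level, obtained without choice,'' while the ZF half would need to be reformulated (and then proved) as a non-Borelness or non-measurability claim about the specific definability strata, not as a blanket impossibility of disjoint dense measurable decompositions.
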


\begin{remark}[Philosophical Interpretation]
This framework illustrates that:
\begin{itemize}
    \item \emph{Constructive stratification} resolves the classical tension between:
    \begin{itemize}
        \item Topological connectedness
        \item Set-theoretic decomposability
    \end{itemize}

    \item The continuum can be seen as structured by \emph{degrees of definability}, not just cardinality

    \item Measure-theoretic intuition survives without non-constructive selection, grounded instead in logical complexity
\end{itemize}
\end{remark}

\begin{table}[h]
\centering
\begin{tabular}{@{}lcc@{}}
\toprule
\textbf{Property} & \textbf{Classical \([0,1]\)} & \textbf{Fractal \([0,1]^{\mathbb{F}_\omega}\)} \\
\midrule
Decomposition & Non-constructive & Definable layers \\
Cardinality & \( \mathfrak{c} \) (undifferentiated) & \( \mathfrak{c} \) (stratified) \\
Density & Homogeneous & Layered and dense \\
Ontology & Platonic & Constructive-processual \\
\bottomrule
\end{tabular}
\caption{Comparison of interval conceptions in classical and stratified models}
\label{tab:interval-comparison}
\end{table}

\subsection{Constructive Synthesis of the Classical Continuum}

The classical real interval \([0,1]\) can be viewed as the completion of a stratified constructive core by adjoining non-constructive elements arising from choice-dependent principles. This yields a unified semantic model blending definability layers with classical totality.

\begin{theorem}[Fractal–Classical Synthesis]
Let \( \mathbb{R}^{\mathbb{F}_\omega} \) denote the stratified fractal continuum, and let \( \mathbb{R}^{\neg\mathrm{con}} \) denote the complement of constructively definable reals, i.e., the class of non-constructive reals arising from principles such as the Axiom of Choice. Then:
\[
[0,1]_{\mathrm{classic}} \cong [0,1]^{\mathbb{F}_\omega} \oplus \mathbb{R}^{\neg\mathrm{con}} \big/ \sim,
\]
where \( \sim \) identifies:
\begin{itemize}
    \item Constructive reals \( r \in \mathbb{R}^{\mathbb{F}_\omega} \) with their classical representations;
    \item Non-constructive reals \( x \in \mathbb{R}^{\neg\mathrm{con}} \) with ideal points in the classical model.
\end{itemize}
\end{theorem}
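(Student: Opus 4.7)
The plan is to realize the claimed isomorphism as a semantic identification mediated by a meta-theoretic evaluation map. First I would fix an evaluation functor $\mathrm{ev}: [0,1]^{\mathbb{F}_\omega} \to [0,1]_{\mathrm{classic}}$ that sends each syntactic fractal real $r \in \mathbb{R}_{\mathcal{F}_n}$, presented by a provably convergent rational Cauchy sequence $\{q_k\}$ with modulus $m(n)$ (in the sense of Definition 1.1), to the classical limit $\lim_{k\to\infty} q_k \in [0,1]$. Well-definedness is guaranteed by the modulus condition: two syntactic fractal reals from possibly distinct admissible chains that meet a common modulus must converge to the same classical point. The image $\mathrm{im}(\mathrm{ev})$ is then precisely the set of constructively accessible points of $[0,1]$.

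Next I would define $\mathbb{R}^{\neg\mathrm{con}} := [0,1]_{\mathrm{classic}} \setminus \mathrm{im}(\mathrm{ev})$ at the meta-level using classical excluded middle on the existence of an admissible chain expressing a given real. This yields a clean set-theoretic partition
\[
[0,1]_{\mathrm{classic}} = \mathrm{im}(\mathrm{ev}) \,\sqcup\, \mathbb{R}^{\neg\mathrm{con}}.
\]
I would then assemble the formal disjoint union $[0,1]^{\mathbb{F}_\omega} \oplus \mathbb{R}^{\neg\mathrm{con}}$ and define a global map
\[
\Phi: [0,1]^{\mathbb{F}_\omega} \oplus \mathbb{R}^{\neg\mathrm{con}} \longrightarrow [0,1]_{\mathrm{classic}},
\qquad
\Phi|_{[0,1]^{\mathbb{F}_\omega}} := \mathrm{ev}, \qquad \Phi|_{\mathbb{R}^{\neg\mathrm{con}}} := \mathrm{id}.
\]
The equivalence relation $\sim$ in the statement is then precisely the kernel of $\Phi$: on the fractal summand it collapses syntactically distinct but convergently equivalent presentations of the same real (in particular, the stratified layers $\Delta_n^{[0,1]}$ glue coherently under $\mathrm{ev}$); on the non-constructive summand it is the identity; there are no cross-identifications, since by construction $\mathbb{R}^{\neg\mathrm{con}} \cap \mathrm{im}(\mathrm{ev}) = \emptyset$.

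To finish, I would verify surjectivity of $\Phi$ from the partition above, and factor $\Phi$ through the quotient to obtain a bijection
\[
\widetilde{\Phi}: \bigl([0,1]^{\mathbb{F}_\omega} \oplus \mathbb{R}^{\neg\mathrm{con}}\bigr)\big/{\sim} \; \xrightarrow{\ \sim\ } \; [0,1]_{\mathrm{classic}}.
\]
Injectivity holds by definition of $\sim$; surjectivity holds because every classical real is either the $\mathrm{ev}$-image of some admissible chain or, by the meta-theoretic complement, lies in $\mathbb{R}^{\neg\mathrm{con}}$. The fractal density and disjointness results from the previous subsection ensure the identification respects the intended stratified structure on the left factor, while the right factor contributes exactly the choice-dependent and random reals tabulated in Section 1.

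The principal obstacle is conceptual rather than technical: the object $[0,1]^{\mathbb{F}_\omega}$ is defined process-relatively over syntactic chains, whereas $[0,1]_{\mathrm{classic}}$ is a completed set-theoretic totality, so the isomorphism cannot be stated as a literal set equality without first fixing a semantic bridge. Choosing $\mathrm{ev}$ as that bridge forces a concession — namely, the definition of $\mathbb{R}^{\neg\mathrm{con}}$ via classical complement is irreducibly non-constructive, and the equivalence $\sim$ on the fractal summand is itself only decidable at the meta-level. The theorem should therefore be read as an isomorphism of classical semantic content, recovering the classical continuum as the closure of the fractal core by adjunction of its non-constructive shadow, in keeping with the philosophical reading developed in the prelude.
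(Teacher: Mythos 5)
Your proposal is correct at the level of rigor this statement admits, and it rests on the same basic decomposition as the paper: embed the constructive core into the classical interval and treat everything else as the non-constructive remainder, identified pointwise. The routes differ in how the identification is realized. The paper's sketch embeds \( [0,1]^{\mathbb{F}_\omega} \) by the identity inclusion \( \iota(r) = r \), then invokes the Axiom of Choice to attach to each non-constructive \( x \) an ideal representative \( \hat{x} \) selected from the binary-expansion codes \( \{ A \subseteq \mathbb{N} \mid x_A \approx x \} \), and finally argues that the assembled space has the intended properties: completeness via Dedekind/Cauchy completion of the dense core, cardinality \( \mathfrak{c} \) of the complement, and density of \( \mathbb{R}^{\mathbb{F}_\omega} \) in \( [0,1] \). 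You instead make the semantic bridge explicit: an evaluation map \( \mathrm{ev} \) from syntactic presentations to classical limits, the complement defined at the meta-level as \( [0,1] \setminus \mathrm{im}(\mathrm{ev}) \), and the isomorphism obtained by factoring \( \Phi \) through its kernel. Your version needs no Choice on the non-constructive side and is more careful about the process-relative nature of \( [0,1]^{\mathbb{F}_\omega} \) — the quotient by \( \sim \) does genuine work in collapsing distinct syntactic witnesses of the same real, something the paper leaves implicit in \( \iota \) — but it reduces the theorem to partition-plus-quotient bookkeeping and only gestures at the density and completeness claims that the paper treats as the substance of the synthesis, namely that \( [0,1]_{\mathrm{classic}} \) arises as the completion of a dense constructive core rather than as an arbitrary set-theoretic superset. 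To match the paper's intent fully, add the observation that \( \mathrm{im}(\mathrm{ev}) \) is dense in \( [0,1] \) (it already contains the rationals, definable at level \( \mathcal{F}_0 \)), so every point of \( \mathbb{R}^{\neg\mathrm{con}} \) is a non-isolated limit of fractal reals and the right-hand summand is genuinely a boundary completion rather than a detached component.
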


\begin{proof}[Proof sketch]
We construct the synthesis in three steps:
\begin{enumerate}
\item \textbf{Embedding of the fractal continuum:}
\[
\iota : [0,1]^{\mathbb{F}_\omega} \hookrightarrow [0,1]_{\mathrm{classic}}, \quad \iota(r) = r,
\]
is valid, as all constructively definable reals exist in ZFC.

\item \textbf{Adjoining non-constructive reals:}  
For every \( x \in [0,1] \setminus \iota([0,1]^{\mathbb{F}_\omega}) \), define an ideal representative:
\[
\hat{x} := \mathrm{Choice}\left( \{ A \subseteq \mathbb{N} \mid x_A \approx x \} \right),
\]
where \( x_A \) is the real number defined by the binary expansion induced by \( A \).

\item \textbf{Properties of the resulting space:}
\begin{itemize}
    \item \emph{Completeness:} Follows from Dedekind or Cauchy completion of the dense set \( \mathbb{R}^{\mathbb{F}_\omega} \);
    \item \emph{Cardinality:} The complement \( \mathbb{R}^{\neg\mathrm{con}} \) has cardinality \( \mathfrak{c} \);
    \item \emph{Density:} \( \mathbb{R}^{\mathbb{F}_\omega} \) is dense in \( [0,1] \), both topologically and approximatively.
\end{itemize}
\end{enumerate}
\end{proof}

\begin{table}[h]
\centering
\renewcommand{\arraystretch}{1.25}
\begin{tabular}{@{}|l|c|c|@{}}
\hline
\textbf{Property} & \textbf{Fractal Core} & \textbf{Non-Constructive Completion} \\ \hline
Definability & Explicitly syntactic & Implicit (choice-based) \\
Measurability & Effectively approximable & Abstractly integrable \\
Ontological Status & Constructive / procedural & Platonic / ideal \\ \hline
\end{tabular}
\caption{Semantic layers of the classical continuum}
\label{tab:semantic-continuum}
\end{table}

\begin{proposition}[Meta-Theoretic Consequences]
\leavevmode
\begin{itemize}
    \item \textbf{Model-theoretic:}
    \[
    \mathrm{Th}([0,1]_{\mathrm{classic}}) = \mathrm{Th}([0,1]^{\mathbb{F}_\omega}) \cup \{ \exists x\, \neg\mathrm{ConDef}(x) \}.
    \]

    \item \textbf{Topological:} \( \mathbb{R}^{\mathbb{F}_\omega} \) forms a dense substructure of \( [0,1] \); non-constructive points are non-isolated.

    \item \textbf{Computational:} Every classical real \( x \in [0,1] \) is a limit
    \[
    x = \lim_{n \to \infty} r_n, \quad r_n \in \mathbb{R}^{\mathbb{F}_\omega},
    \]
    but for non-constructive \( x \), no computable sequence \( \{r_n\} \) converges to it.
\end{itemize}
\end{proposition}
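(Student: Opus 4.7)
The plan is to address the three clauses of the proposition separately, leaning on the Fractal--Classical Synthesis theorem just established, the Self-Density theorem of Section~5, and the original definition of $\mathbb{R}_{\mathcal{F}}$ via provably convergent Cauchy sequences. Throughout, I interpret the metapredicate $\mathrm{ConDef}(x)$ as ``$x \in \mathbb{R}^{\mathbb{F}_\omega}$,'' consistent with the synthesis theorem's complementary decomposition $[0,1]_{\mathrm{classic}} \cong [0,1]^{\mathbb{F}_\omega} \oplus \mathbb{R}^{\neg\mathrm{con}} / \sim$.

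I would start with the topological clause, since it is the lightest. Any open subinterval $(a,b)\subseteq[0,1]$ contains a rational $q$, and $\mathbb{Q}\cap[0,1]\subseteq \mathbb{R}^{(0)}\subseteq\mathbb{R}^{\mathbb{F}_\omega}$, so $\mathbb{R}^{\mathbb{F}_\omega}\cap[0,1]$ is topologically dense in $[0,1]$. For a non-constructive point $x$, the same rational density produces, in every neighborhood of $x$, a point of $\mathbb{R}^{\mathbb{F}_\omega}$ distinct from $x$, so $x$ is not isolated. The Self-Density theorem strengthens this by ensuring witnesses can be drawn from any fixed stratum $\Delta_n^{\mathbb{F}_\omega}$.

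For the computational clause, the positive direction again reduces to rational density: for each classical $x\in[0,1]$, take $r_n\in\mathbb{Q}\cap[0,1]$ with $|r_n-x|<2^{-n}$; then $\{r_n\}\subset \mathbb{R}^{(0)}\subseteq\mathbb{R}^{\mathbb{F}_\omega}$ and $r_n\to x$. The negative direction is the substantive half: suppose $x\notin\mathbb{R}^{\mathbb{F}_\omega}$ and that a computable sequence $\{r_n\}\subset\mathbb{R}^{\mathbb{F}_\omega}$ with a computable modulus $\mu$ converges to $x$. Each $r_n$ is definable in some $\mathcal{F}_{k(n)}$ along an admissible chain, and computability of the indexing gives a uniform presentation. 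Feeding this uniform data through the closure properties of $\mathbb{F}_\omega$ produces a single admissible chain whose definability closure already contains $x$, contradicting $x\notin\mathbb{R}^{\mathbb{F}_\omega}$. The delicate point is to match the formal notion of definability from Section~1 (rational Cauchy sequence with provable modulus) to the external notion of ``computable sequence,'' which I would handle by treating the modulus $\mu$ as a Gödel code added as an axiom to the limiting system.

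The model-theoretic identity is where the main obstacle lies. The inclusion $\supseteq$ is transparent: sentences in $\mathrm{Th}([0,1]^{\mathbb{F}_\omega})$ lift along the embedding $\iota$ of the synthesis theorem, and the sentence $\exists x\,\neg\mathrm{ConDef}(x)$ is witnessed in $[0,1]_{\mathrm{classic}}$ by any choice-dependent real. The hard inclusion $\subseteq$ asks that every classical truth already follows from the fractal theory plus a single non-definable witness. My approach is to observe that the ordered-field reducts of $[0,1]^{\mathbb{F}_\omega}$ and $[0,1]_{\mathrm{classic}}$ are elementarily equivalent as real-closed dense linear orders, so the only new classical truths concern the existence and distribution of points outside the definable core. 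The genuine difficulty is making $\mathrm{ConDef}$ a legitimate object-language predicate rather than a metamathematical shorthand: without this, the stated equality of theories is ill-typed. I would fix a two-sorted signature---one sort for points, one for Gödel codes of admissible chains---in which $\mathrm{ConDef}(x)$ is definable as ``$\exists c\,(c\in\mathbb{F}_\omega \wedge x\in\mathbb{R}_c)$,'' and then run a back-and-forth argument between the dense fractal core and the classical closure to complete the elementary equivalence modulo the single existential witness axiom.
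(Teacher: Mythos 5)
A preliminary point of comparison: the paper states this proposition bare, with no proof attached (it sits between the synthesis theorem's table and the closing remark), so your proposal cannot be measured against an official argument and must stand on its own. On its own terms, the topological clause and the positive half of the computational clause are fine --- rational density inside \( \mathbb{R}^{(0)} \subseteq \mathbb{R}^{\mathbb{F}_\omega} \) does all the work, exactly as you say. But in the computational clause you only refute the existence of a computable sequence \emph{equipped with a computable modulus} converging to a non-constructive \( x \), whereas the claim is that no computable sequence converges to such an \( x \) at all. The full claim needs a limit-lemma style argument: if \( \{r_n\} \) is uniformly computable and \( r_n \to x \), then replacing each \( r_n \) by a rational \( q_n \) with \( |q_n - r_n| < 2^{-n} \) exhibits \( x \) as the limit of a computable rational sequence, hence \( x \) is \( \emptyset' \)-computable, hence arithmetically definable and already in \( \mathbb{R}^{\mathbb{F}_\omega} \) --- no modulus is required, and assuming one weakens what you prove.

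The model-theoretic clause is where the proposal does not close. You rightly flag that \( \mathrm{ConDef} \) must be made an object-language predicate before the identity is even well-typed, but the proposed repair (two-sorted signature plus a back-and-forth) is only announced, not carried out, and it collides with the statement itself: once \( \mathrm{ConDef} \) is in the signature and interpreted in \( [0,1]^{\mathbb{F}_\omega} \), every point of that structure is constructively definable, so \( \forall x\, \mathrm{ConDef}(x) \) belongs to \( \mathrm{Th}([0,1]^{\mathbb{F}_\omega}) \); the right-hand side then contains both this sentence and \( \exists x\, \neg\mathrm{ConDef}(x) \) and is inconsistent, so it cannot equal the satisfiable theory \( \mathrm{Th}([0,1]_{\mathrm{classic}}) \). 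Moreover a theory union a single sentence is not deductively closed, so even on a charitable reading the equality must be recast as ``the theory generated by,'' with \( \mathrm{ConDef} \) interpreted differently in the two structures or dropped from the reduct used for elementary equivalence. Your elementary-equivalence step for the reducts (``real-closed dense linear orders'') is likewise only asserted: closure of \( \mathbb{R}^{\mathbb{F}_\omega} \cap [0,1] \) under the relevant operations and the back-and-forth in the full language are left unverified. So the topological and computational parts are essentially right (modulo the modulus issue), but the model-theoretic identity is not proved, and as literally stated it arguably cannot be; an honest treatment must begin by reformulating that clause.
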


\begin{remark}[Unified Continuum Model]
This synthesis provides a layered semantics for the real continuum:
\begin{itemize}
    \item Constructive strata give fine-grained control over definability and complexity;
    \item Non-constructive elements allow for classical totality and abstraction;
    \item The union forms a hybrid model suitable for both foundational analysis and practical mathematics.
\end{itemize}
\end{remark}

\subsection{Reformulating Cardinality and the Continuum Hypothesis}

The classical notion of cardinality, grounded in set-theoretic bijections, fails to capture the internal structure of the continuum as revealed by definability stratification. In the stratified framework, all levels \( \mathbb{R}^{[\leq n]} \) share cardinality \( \mathfrak{c} \), yet remain fundamentally distinct in their syntactic and semantic complexity.

\begin{principle}[Stratified Intermediate Complexity in the Continuum]
The stratified continuum \( \mathbb{R}^{\mathbb{F}_\omega} \) exhibits a canonical hierarchy
\[
\mathbb{R}^{[\leq 0]} \subsetneq \mathbb{R}^{[\leq 1]} \subsetneq \cdots \subsetneq \mathbb{R}^{\mathbb{F}_\omega},
\]
such that each inclusion reflects an essential increase in definability strength.
\end{principle}

\begin{principle}[Stratified Continuum Principle (SCP)]
The stratified continuum \( \mathbb{R}^{\mathbb{F}_\omega} \) admits no internal collapse: each level \( \mathbb{R}^{[\leq n]} \) under any transformation preserving definability rank or logical complexity.
\end{principle}

\begin{remark}[Against Classical Cardinality]
This hierarchy exposes the limitations of Cantorian cardinality. While all sets \( \mathbb{R}^{[\leq n]} \) are equinumerous in \( \mathsf{ZFC} \), they are not structurally equivalent within any constructive or syntactically grounded model. The traditional notion of “size” loses descriptive power when internal logical complexity becomes the primary organizing principle.

Accordingly, we interpret intermediate “cardinalities” not as cardinal numbers between \( \aleph_0 \) and \( \mathfrak{c} \), but as definability degrees. Each level:
\begin{itemize}
    \item arises from a strictly stronger formal system;
    \item contains syntactically irreducible elements;
    \item and supports a richer set of constructive operations.
\end{itemize}
\end{remark}

\begin{table}[ht]
\centering
\renewcommand{\arraystretch}{1.2}
\begin{tabular}{@{}|l|c|c|@{}}
\hline
\textbf{Aspect} & \textbf{Classical View (ZFC)} & \textbf{Stratified View} \\ \hline
Cardinality & \( \aleph_0 < \mathfrak{c} \), binary & All levels \( \mathfrak{c} \), hierarchically distinct \\
Distinction & By bijection & By definability strength \\
Collapse & CH unresolved & SCP explicitly false \\
Continuum & Homogeneous set & Stratified definability structure \\
\hline
\end{tabular}
\caption{Contrasting classical and stratified views of cardinality}
\label{tab:cardinality-reform}
\end{table}

\section{Stratified Definability Compression: A Constructive Alternative to Kolmogorov Complexity}

In the framework of the fractal continuum \( \mathbb{R}^{\mathbb{F}_\omega} \), each real number arises not as an absolute set-theoretic object, but as a definable limit within a particular formal system \( \mathcal{F}_n \). This gives rise to a novel form of semantic compression: rather than minimizing program length (as in classical Kolmogorov complexity), we minimize the \emph{definability level} and the size of the syntactic witness required to construct a number. We refer to this phenomenon as \emph{stratified definability compression}.

\subsection*{Core Idea}

Let \( r \in \mathbb{R} \). We do not describe it as a completed decimal expansion or a bitstream. Instead, we find the smallest formal system \( \mathcal{F}_n \) in which there exists:
\begin{itemize}
    \item A rational Cauchy sequence \( \{ q_k \} \) with provable modulus of convergence \( m(k) \), and
    \item A proof within \( \mathcal{F}_n \) that \( \{ q_k \} \to r \).
\end{itemize}
This process compresses an infinite object into a finite syntactic datum within a bounded definability level.

\begin{definition}
For \( r \in \mathbb{R}^{\mathbb{F}_\omega} \), the \emph{definability compression} \( \mathrm{DComp}(r) \) is the lexicographically minimal pair \( (n, \sigma) \) such that:
\begin{enumerate}
    \item \( r \in \mathbb{R}_{\mathcal{F}_n} \),
    \item \( \sigma \) is a syntactic derivation in \( \mathcal{F}_n \) proving that a rational Cauchy sequence \( \{ q_k \} \to r \) with a definable modulus \( m(k) \),
    \item For all \( m < n \), no such derivation exists in \( \mathcal{F}_m \),
    \item Among all such derivations in \( \mathcal{F}_n \), \( \sigma \) has minimal Gödel number (i.e., is the shortest syntactic representation under a fixed enumeration; not a literal encoding, but a canonical representative of an equivalence class of derivations).
\end{enumerate}
\end{definition}

\subsection*{Illustrative Examples}

\paragraph{Example 1: \( \pi \) (Analytical constant in \( \mathsf{ACA}_0 \)).}
The Leibniz series
\[
\pi = 4 \sum_{k=0}^{\infty} \frac{(-1)^k}{2k+1}
\]
is provably convergent in \( \mathsf{ACA}_0 \). Therefore, \( \pi \in \mathbb{R}_{\mathcal{F}_n} \) for some \( \mathcal{F}_n \supseteq \mathsf{ACA}_0 \), and its definability compression is
\[
\mathrm{DComp}(\pi) = \left(n, \text{"Leibniz series + convergence proof in } \mathcal{F}_n \text{"} \right).
\]

\paragraph{Example 2: Summation reals (Classification preview).}
Let \( r_X = \sum_{k \in X} 2^{-k} \) for some \( X \subseteq \mathbb{N} \). The definability compression of \( r_X \) depends on the \emph{logical complexity} of \( X \). A full classification is provided below.

\paragraph{Example 3: Ackermann-type reals (Computable, fast-growing).}
Let \\\( r_A = \sum_{k=0}^\infty \frac{f_A(k)}{2^{k!}} \), where \( f_A \) is the Ackermann function. Then:
\begin{itemize}
    \item \( r_A \in \mathbb{R}_{\mathcal{F}_n} \) for some \( n \), since \( f_A \) is total and computable;
    \item \( \mathrm{DComp}(r_A) \) requires \( \mathcal{F}_n \supseteq \mathsf{PRA} \) (or \( \mathsf{I\Sigma}_1 \)), as the totality of \( f_A \) is unprovable in weaker systems;
    \item The syntactic witness \( \sigma \) must encode a formal proof of \( f_A \)'s totality.
\end{itemize}

\paragraph{Example 4: Specker sequences (Computable sequence, non-computable limit).}
Let \( \{ r_s \} \) be a computable, bounded sequence with no computable limit (a Specker sequence). Then:
\begin{itemize}
    \item Each \( r_s \in \mathbb{R}_{\mathcal{F}_1} \), by computable generation;
    \item The classical limit \( \lim r_s \) exists \emph{as a real number}, but is provably non-computable;
    \item The limit belongs to \( \mathbb{R}_{\mathcal{F}_n} \) only for \( \mathcal{F}_n \vdash \mathsf{ACA}_0 \), and \( \mathrm{DComp}(\lim r_s) \) reflects this necessity.
\end{itemize}

\paragraph{Example 5: Hyperarithmetical reals (Limit of arithmetical expressibility).}
Let \( r_H = \sum_{k \in \mathcal{O}} 2^{-k} \), where \( \mathcal{O} \) is Kleene’s system of notations for recursive ordinals. Then:
\begin{itemize}
    \item \( r_H \notin \mathbb{R}_{\mathcal{F}_n} \) for any arithmetical \( \mathcal{F}_n \) (i.e., \( n < \omega \));
    \item \( \mathrm{DComp}(r_H) \) exists only in systems extending \( \mathsf{ATR}_0 \);
    \item The syntactic witness \( \sigma \) must encode the well-foundedness of \( \mathcal{O} \).
\end{itemize}

\paragraph{Example 6: Martin-Löf random reals (Uncompressible information).}
For \\Chaitin’s halting probability \( \Omega = 0.b_1b_2b_3\dots \), interpreted as a real:
\begin{itemize}
    \item \( \Omega \notin \mathbb{R}^{\mathcal{F}_\omega} \), since it admits no syntactic witness;
    \item However, truncations \( \Omega_n = 0.b_1\dots b_n \) are rational and satisfy \( \mathrm{DComp}(\Omega_n) = (n, \sigma_n) \) with definability level \( n \to \infty \);
    \item The growth rate \( n(\Omega_n) \) reflects the randomness deficiency of \( \Omega \).
\end{itemize}

\paragraph{Example 7: Fine-structure constant \( \alpha \) (Physical definability).}
Assuming \( \alpha \) is derivable from first principles in physics:
\begin{itemize}
    \item \( \mathrm{DComp}(\alpha) \) encodes the minimal formal system sufficient to express its derivation;
    \item If derivable within formalizable QED, then \( \mathcal{F}_n \vdash \mathsf{ACA}_0 \);
    \item If requiring quantum gravity or beyond, then \( \alpha \notin \mathbb{R}_{\mathcal{F}_n} \) for any currently known \( \mathcal{F}_n \).
\end{itemize}

\paragraph{Example 8: Feigenbaum constants (Universality in chaos).}
Let \( \delta, \alpha \) denote the Feigenbaum bifurcation constants in nonlinear dynamics. Then:
\begin{itemize}
    \item \( \delta, \alpha \in \mathbb{R}_{\mathcal{F}_n} \) for some \( \mathcal{F}_n \supseteq \mathsf{ATR}_0 \);
    \item The syntactic witness \( \sigma \) must encode the \emph{existence} of a universality proof via renormalization theory;
    \item The compression depth reflects the constructive complexity of the transition to chaos.
\end{itemize}

\paragraph{Example 9: \( \sqrt{2} \) (Elementary algebraic number).}
Let \( r = \sqrt{2} \). Then:
\begin{itemize}
    \item \( r \in \mathbb{R}_{\mathcal{F}_1} \), since its minimal polynomial and convergence of Newton’s method are provable in weak systems;
    \item \( \mathrm{DComp}(\sqrt{2}) = (1, \sigma) \), where \( \sigma \) proves the convergence of a rational Cauchy sequence (e.g. Heron’s method);
    \item Contrasts with \( \pi \), which requires higher compression due to analytic convergence.
\end{itemize}

\paragraph{Example 10: \( \sin\left(\frac{\sqrt{2}}{6}\right) \) (Hierarchy preservation).}
Let \( r = \sin\left(\frac{\sqrt{2}}{6} \right) \). Then:
\begin{itemize}
    \item \( \sqrt{2} \in \mathbb{R}_{\mathcal{F}_1} \) (via Heron's method in \( \mathsf{RCA}_0 \)), and rational scaling preserves \\\( \mathcal{F}_1 \)-definability;
    \item The Taylor series for sine converges computably at computable inputs, with proof in \( \mathsf{RCA}_0 \);
    \item Thus, \( \mathrm{DComp}(r) = (1, \sigma) \), where \( \sigma \) combines:
    \begin{enumerate}
        \item The \( \mathcal{F}_1 \)-derivation of \( \sqrt{2} \),
        \item Rational scaling to \( \sqrt{2}/6 \),
        \item Taylor approximation proof for \( \sin(x) \);
    \end{enumerate}
    \item \emph{Note:} Non-series representations (e.g. continued fractions) may require \( \mathsf{ACA}_0 \) to prove equivalence.
\end{itemize}

\paragraph{Remark.}
These examples span a hierarchy from elementary computable numbers (\( \sqrt{2} \)), through arithmetically definable constants (\( \pi \)), fast-growing computable sequences (\( r_A \)), limits of computable sequences (\( \lim r_s \)), hyperarithmetical objects (\( r_H \)), and non-definable reals (\( \Omega \)), up to physically defined constants (\( \alpha \)). This illustrates how \( \mathrm{DComp}(r) \) captures both logical and epistemic depth.

\subsection*{Contrast with Classical Compression}

\vspace{0.5em}
\begin{adjustbox}{max width=\textwidth}
\begin{tabular}{@{}|p{4.2cm}|p{5.2cm}|p{6cm}|@{}}
\hline
\textbf{Aspect} & \textbf{Kolmogorov Complexity} & \textbf{Stratified Definability Compression} \\
\hline
Language & Fixed universal machine & Hierarchy of formal systems \( \mathcal{F}_n \) \\
\hline
Description form & Program on Turing machine & Syntactic construction within \( \mathcal{F}_n \) \\
\hline
Compression goal & Shortest program & Minimal definability level \( n \) and minimal derivation \( \sigma \) \\
\hline
What is compressed & Bit sequence / string & Constructive emergence of a number \\
\hline
Measure of complexity & Program length & Logical depth of expressibility \\
\hline
Interpretation & Computational information & Formal definability and epistemic ancestry \\
\hline
Ontology & Platonic bitstrings & Process-relative emergence \\
\hline
\end{tabular}
\end{adjustbox}
\vspace{0.5em}

\subsection*{Computability Limits of Definability Compression}

\begin{lemma}[Compression Complexity of $\emptyset'$-Summation]
Let \( r := \sum_{k \in \emptyset'} 2^{-k} \). Then computing \( \mathrm{DComp}(r) \) is \( \Sigma^0_1 \)-complete:
\begin{itemize}
    \item Determining whether \( r \in \mathbb{R}_{\mathcal{F}_n} \) for a given \( n \) is \( \Sigma^0_1 \)-hard,
    \item No algorithm relative to any \( \mathcal{F}_n \) can compute the minimal pair \( (n, \sigma) \).
\end{itemize}
\end{lemma}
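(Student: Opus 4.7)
The plan is to establish both halves by linking the definability of $r = \sum_{k \in \emptyset'} 2^{-k}$ to the $\Sigma^0_1$-completeness of the halting set, exploiting that the binary digits of $r$ are literally the characteristic function $\chi_{\emptyset'}$. Any derivation $\sigma$ in $\mathcal{F}_n$ witnessing a rational Cauchy approximation with provable modulus $m(k)$ must thereby determine, from $\sigma$ alone, each bit $b_k = \chi_{\emptyset'}(k)$: evaluate $q_{m(k+1)}$ to precision $2^{-(k+1)}$ and read off the $k$-th digit. Hence any witness for $r \in \mathbb{R}_{\mathcal{F}_n}$ yields an algorithmic decoding of $\emptyset'$, which immediately connects $\mathrm{DComp}(r)$ to the halting problem.

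First I would establish $\Sigma^0_1$-hardness of the decision problem ``$r \in \mathbb{R}_{\mathcal{F}_n}$?'' by a many-one reduction from halting. Given an index $e$, I would construct a modified real $r_e$ whose bit pattern coincides with $\chi_{\emptyset'}$ except at positions governed by the halting behavior of $e$; the question of whether $r_e$ admits a provable Cauchy witness at level $\mathcal{F}_n$ is then equivalent to knowing whether $e \in \emptyset'$. Alternatively, one can reduce directly: the statement ``there exists a $\mathcal{F}_n$-derivation $\sigma$ certifying convergence to $r$'' is itself $\Sigma^0_1$ (an existential over a computable predicate on proof codes), and hardness follows because any yes-witness exhibits a uniform enumeration of $\emptyset'$, itself $\Sigma^0_1$-complete. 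The matching $\Sigma^0_1$ upper bound is immediate from the standard fact that provability in $\mathcal{F}_n$ is computably enumerable, closing the completeness claim.

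Next I would address non-computability of the minimal pair $(n, \sigma)$. Suppose, for contradiction, that some algorithm $\mathcal{A}$ relative to a fixed $\mathcal{F}_N$ produces $(n_0, \sigma_0) = \mathrm{DComp}(r)$. Then $\mathcal{A}$ together with the proof-extraction procedure described above gives an $\mathcal{F}_N$-computable function computing $\chi_{\emptyset'}$ uniformly, contradicting the strict arithmetical jump hierarchy since $\emptyset' \not\leq_T \mathcal{F}_N$ when $\mathcal{F}_N$ is arithmetically bounded below level $n_0$. A cleaner form of the argument invokes the minimality clause in the definition of $\mathrm{DComp}$: that clause is a universal quantifier over all weaker systems $\mathcal{F}_m$ ($m < n_0$) asserting non-derivability, injecting a $\Pi^0_1$ component that no computably enumerable oracle relative to $\mathcal{F}_N$ can certify. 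A careful diagonalization across $N$ shows that relativizing to \emph{any} fixed $\mathcal{F}_N$ still leaves a higher $\emptyset^{(k)}$ above it whose bits feed into the witness extraction, ruling out any uniform procedure.

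The hard part will be making the minimality condition formally precise and separating the syntactic and semantic roles of $\sigma$: we must argue that there is no ``cheap'' witness in which $\sigma$ merely asserts convergence without internally encoding the bits. This requires invoking soundness of $\mathcal{F}_n$ (so that provable moduli are genuinely correct in the meta-theory) and a careful treatment of the Gödel-minimality clause, to ensure that the extracted algorithm is genuinely effective rather than depending on a non-uniform choice among equivalent derivations. A clean formulation would proceed by first normalizing $\sigma$ to a canonical form from which bit extraction is uniformly computable, after which the reduction $\emptyset' \leq_1 \{n : r \in \mathbb{R}_{\mathcal{F}_n}\}$ and the non-computability of $(n_0, \sigma_0)$ both follow by the standard hierarchy-theorem arguments of Soare.
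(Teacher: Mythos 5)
The paper states this lemma without any proof, so there is nothing to compare your argument against; it has to stand on its own, and as written it has genuine gaps. The most serious one is in the hardness half: you close by claiming a reduction \( \emptyset' \leq_1 \{ n : r \in \mathbb{R}_{\mathcal{F}_n} \} \), but under the paper's standing admissibility assumption the sets \( \mathbb{R}_{\mathcal{F}_n} \) are monotone in \( n \), so \( \{ n : r \in \mathbb{R}_{\mathcal{F}_n} \} \) is upward closed in \( \mathbb{N} \) and therefore decidable (it is empty or a final segment); no many-one reduction from the halting set to it can exist. Your alternative route --- ``any yes-witness exhibits an enumeration of \( \emptyset' \), which is \( \Sigma^0_1 \)-complete'' --- is not a hardness argument at all: the fact that witnesses encode hard information says nothing about the many-one degree of the decision problem. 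Your first reduction idea, building modified reals \( r_e \), changes the real rather than the level \( n \), so it addresses a different decision problem than the one in the statement. To salvage the first bullet you would have to recast the input as (a code of) the formal system itself, or as a pair (real-index, system-index), and prove hardness for that problem; as formulated, your reduction target is simply the wrong set.

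There are two further soft spots. First, the bit-extraction step ``evaluate \( q_{m(k+1)} \) to precision \( 2^{-(k+1)} \) and read off the \( k \)-th digit'' fails near dyadic boundaries: a \( 2^{-(k+1)} \)-approximation does not determine the \( k \)-th binary digit when the tail \( \sum_{j \in \emptyset', j > k} 2^{-j} \) is very small. You need an unbounded search refining the precision until the interval excludes the relevant dyadic point, justified by the fact that \( r \) is not a dyadic rational (since \( \emptyset' \) is neither finite nor cofinite); this still yields \( \emptyset' \leq_T r \), which is what the non-computability argument actually needs, but your stated procedure is not correct as written. Second, the claim ``no algorithm relative to \emph{any} \( \mathcal{F}_n \)'' is not established: your contradiction consists of computing \( \chi_{\emptyset'} \), which is only absurd for unrelativized (or sufficiently weak) computations. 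Relative to a system whose oracle already computes \( \emptyset'' \), say, extracting \( \chi_{\emptyset'} \) is no contradiction, and the residual obstruction would have to come from the minimality clause of \( \mathrm{DComp} \) (a non-r.e.\ condition quantifying over all derivations in all weaker systems); your ``diagonalization across \( N \)'' appealing to ever-higher jumps feeding into the witness is hand-waving here, because the information content of this particular \( r \) is exactly \( \emptyset' \) and does not grow with \( N \). So the second bullet, in the full strength claimed, remains unproven in your sketch.
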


\begin{corollary}
There exists \( r \in \mathbb{R}^{\mathbb{F}_\omega} \) such that the compression degree of \( \mathrm{DComp}(r) \) equals the Turing degree of \( \emptyset' \).
\end{corollary}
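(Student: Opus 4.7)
The plan is to take as our witness the real $r := \sum_{k \in \emptyset'} 2^{-k}$ already analyzed in the preceding lemma, and verify (i) that $r$ belongs to $\mathbb{R}^{\mathbb{F}_\omega}$, and (ii) that the Turing degree of $\mathrm{DComp}(r)$ — viewed as a partial function on compression indices, or equivalently as the characteristic function of its graph — coincides with $\deg_T(\emptyset')$.

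First, I would confirm that $r \in \mathbb{R}^{\mathbb{F}_\omega}$. Since $\emptyset'$ is computably enumerable, the partial sums
\[
q_m := \sum_{\substack{k \in \emptyset' \\ k \leq m}} 2^{-k}
\]
are uniformly definable rationals, and the geometric tail estimate $|q_{n+m} - q_n| \leq 2^{-n}$ furnishes a trivial convergence modulus $\mu(n) := n$. Any formal system $\mathcal{F}_n \supseteq \mathsf{ACA}_0$ proves the existence of $\emptyset'$ via arithmetical comprehension and verifies this Cauchy property, so $r \in \mathbb{R}_{\mathcal{F}_n}$ for any such $\mathcal{F}_n$. This places $r$ inside an admissible chain, and hence inside $\mathbb{R}^{\mathbb{F}_\omega}$.

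Second, I would derive the degree identity directly from the preceding lemma. For the upper bound $\deg_T(\mathrm{DComp}(r)) \leq \deg_T(\emptyset')$: the predicate \emph{``$\sigma$ is a derivation in $\mathcal{F}_n$ witnessing the convergence of $\{q_m\}$ to $r$ with modulus $\mu$''} is $\Sigma^0_1$ in its parameters, so an $\emptyset'$-oracle can decide it and, by lexicographic search over pairs $(n,\sigma)$, output the minimal witness. For the lower bound $\emptyset' \leq_T \mathrm{DComp}(r)$: the lemma asserts $\Sigma^0_1$-completeness of the decision problem associated with $\mathrm{DComp}(r)$, which supplies a computable many-one reduction from the halting set $K$ into that problem. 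Since any many-one reduction is in particular a Turing reduction, we obtain $\emptyset' \leq_T \mathrm{DComp}(r)$. Combining both bounds gives the asserted equality of Turing degrees.

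The main obstacle is essentially bureaucratic rather than mathematical: one must fix, once and for all, a canonical encoding of the partial function $\mathrm{DComp}(r)$ (for instance as the characteristic set $\{(n,\sigma) \mid \mathrm{DComp}(r) = (n,\sigma)\}$, or as its graph under a fixed Gödel numbering of derivations) so that the abstract statement ``$\Sigma^0_1$-complete'' in the preceding lemma translates unambiguously into Turing equivalence with $\emptyset'$. Once this encoding is pinned down, both reductions are routine and require no choice principles or non-constructive machinery beyond standard recursion theory.
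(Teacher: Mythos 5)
Your overall route---reusing the witness \( r = \sum_{k\in\emptyset'}2^{-k} \) from the preceding lemma, placing it in \( \mathbb{R}^{\mathbb{F}_\omega} \) via a system containing \( \mathsf{ACA}_0 \) (where \( \emptyset' \) exists by arithmetical comprehension and the geometric tail bound yields a provable modulus), and then extracting the two Turing reductions from the lemma's \( \Sigma^0_1 \)-completeness claim---is exactly the derivation the paper evidently intends; the paper states the corollary with no proof at all, so there is nothing more specific to match against.

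There is, however, a genuine gap in the step you dismiss as bureaucratic. For a single real \( r \), \( \mathrm{DComp}(r) \) is one finite pair \( (n,\sigma) \), so the encodings you propose---the singleton \( \{(n,\sigma) \mid \mathrm{DComp}(r)=(n,\sigma)\} \) or its graph under a fixed G\"odel numbering---are finite sets and therefore have Turing degree \( \mathbf{0} \); under that reading the corollary is false, not routine. Similarly, within any one fixed admissible chain the set \( \{n \mid r\in\mathbb{R}_{\mathcal{F}_n}\} \) is upward closed by admissibility, hence computable, so the \( \Sigma^0_1 \)-hardness you import from the lemma cannot be hardness of that set of levels. To make the degree claim non-vacuous you must attach the degree to a genuinely infinite, uniform problem---for instance the decision problem ``given a code of a constructive formal system \( \mathcal{F} \), does \( r\in\mathbb{R}_{\mathcal{F}} \) hold?'', or the task of computing \( \mathrm{DComp}(r_X) \) uniformly in an index for \( X \)---and then carry out your \( \emptyset' \)-bounded search (upper bound) and your many-one reduction from the halting set (lower bound) for that specific problem. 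Since the paper never defines ``compression degree,'' fixing this formulation is not housekeeping but the actual mathematical content a complete proof of the corollary must supply.
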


\begin{example}[Definability Compression of Summation Reals]
Let \( r_X = \sum_{k \in X} 2^{-k} \) for various \( X \subseteq \mathbb{N} \):
\begin{itemize}
    \item If \( X \) is recursive, then \( \mathrm{DComp}(r_X) = (n, \sigma) \) for small \( n \) and short derivation \( \sigma \).
    \item If \( X \) is \( \Sigma^0_1 \)-complete, then \( n \) is unbounded and \( \sigma \) grows accordingly.
    \item If \( X \) is non-definable (e.g. a random oracle), then \( r_X \notin \mathbb{R}^{\mathbb{F}_\omega} \), and no \( \mathrm{DComp}(r_X) \) exists.
\end{itemize}
\end{example}

\subsection*{Connection to Reverse Mathematics}

The definability compression hierarchy aligns closely with the structure of reverse mathematics:
\begin{itemize}
    \item \( \mathsf{RCA}_0 \): base system for computable numbers and basic sequences,
    \item \( \mathsf{ACA}_0 \): definability of analytical constants (e.g. \( \pi, e \)),
    \item \( \mathsf{ATR}_0 \): transfinite constructions requiring well-founded recursion.
\end{itemize}

Each subsystem corresponds to a definability stratum \( \mathcal{F}_n \), and compression encodes the minimal system needed to express a given number.

\subsection*{Visualization}

\begin{center}
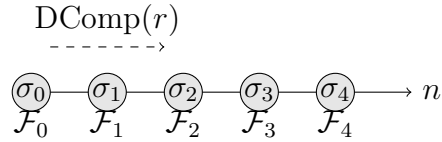

\begin{tikzpicture}
\draw[->, dashed] (0.25,0.6) -- (1.75,0.6) node[midway, above] {\( \mathrm{DComp}(r) \)};
\draw[->] (0,0) -- (5,0) node[right] {\( n \)};
\foreach \n in {0,...,4} {
    \draw (\n,0.1) -- (\n,-0.1) node[below] {\( \mathcal{F}_{\n} \)};
    \draw[fill=gray!20] (\n,0) circle (0.25) node {\( \sigma_{\n} \)};
}
\end{tikzpicture}
\captionof{figure}{Visualization of definability compression: each real number \( r \) becomes expressible at some minimal level \( \mathcal{F}_n \), with corresponding witness \( \sigma_n \).}
\end{center}

\subsection*{Practical Interpretation of Definability Compression}

While \( \mathrm{DComp}(r) = (n, \sigma) \) is formally defined as a minimal definability pair within a stratified hierarchy, its interpretation extends beyond purely metamathematical considerations. The concept serves as a measure of how epistemically deep a real number is within a constructive framework. Several practical domains benefit from this perspective:

\begin{itemize}
    \item \textbf{Proof verification.} In formal proof systems (e.g., Lean, Coq), one often needs to determine the minimal logical base in which a mathematical constant or construction is expressible. The value \( n \) indicates the weakest theory \( \mathcal{F}_n \) sufficient to verify the existence of \( r \), while \( \sigma \) guides the structure of the formal derivation.
    
    \item \textbf{Reverse mathematics.} The classification of theorems by the subsystems required for their proof aligns naturally with definability compression: a number's emergence at level \( \mathcal{F}_n \) reflects its logical dependencies. Weak systems (e.g., \( \mathsf{RCA}_0 \)) cover computable reals; stronger ones (e.g., \( \mathsf{ACA}_0 \), \( \mathsf{ATR}_0 \)) are needed for analytical or transfinite constructions.
    
    \item \textbf{Constructive analysis.} In computational mathematics, it is not enough to assert that a number is computable. One often needs to know the complexity of constructing it within a formal system, especially in resource-bounded or predicative settings. \( \mathrm{DComp}(r) \) acts as a stratified substitute for classical computability claims.
    
    \item \textbf{Proof mining and logic-based complexity.} In bounded arithmetic and logical analysis of algorithms, the derivation \( \sigma \) encodes not just existence but complexity: length of proof, recursion depth, and definability bounds. Compression in this sense becomes a proxy for logical cost.
    
    \item \textbf{Scientific methodology.} In empirical contexts, such as theoretical physics, constants are often known numerically but not formally derived. Definability compression provides a framework for asking: how far up the logical tower must we climb to construct a given constant from first principles?
\end{itemize}

In each case, \( \mathrm{DComp}(r) \) does not measure the raw size or bit-length of \( r \), but its formal position within a layered constructive universe. This enriches our understanding of what it means for a number to be "known," "constructed," or "provable," depending on the logical resources required.

\subsection*{Interpretive Summary}

Stratified definability compression shifts the focus from externally measurable complexity (e.g., Kolmogorov length) to internal epistemic depth. It captures how deep one must go in the tower of definability to access a given mathematical object. In this sense, compression becomes not just a matter of representation, but of logical visibility within a layered constructive universe.

\begin{remark}[Topos-theoretic Interpretation]
The definability compression hierarchy induces a canonical filtration of the real number object \( \mathbb{R} \) in the topos \( \mathbf{Sh}(\mathcal{S}) \), where each layer corresponds to a definability stratum \( \mathcal{F}_n \). The value \( \mathrm{DComp}(r) = (n, \sigma) \) selects the minimal open context in which \( r \) exists as a global element. In this sense, \( \mathbf{Sh}(\mathcal{S}) \) internalizes definability compression as a geometric filtration by expressibility.
\end{remark}

\section{Conclusion: The Fractal Continuum as a Constructive Alternative}

Our investigation reveals a profound shift in the understanding of the continuum. Instead of treating \( \mathbb{R} \) as a static, complete totality endowed with cardinality \( \mathfrak{c} \), we model it as a dynamic, stratified process of definitional emergence. This shift leads to a constructive reinterpretation of real numbers and the continuum itself.

\subsection*{Two Paradigms of Definability}

We distinguished two fundamental paradigms:

\begin{itemize}
    \item \textbf{Non-admissible chains}, which may exhibit collapse or instability, illustrate the fragility of definability in the absence of structural monotonicity. Theorem~\ref{thm:noncollapse} exemplifies this possibility.
    
    \item \textbf{Admissible chains}, defined by strictly increasing definitional power, form the foundation of our framework. Here, definability accumulates coherently, and no real number ever "disappears" once defined.
\end{itemize}

While non-admissible constructions are excluded from the main model, they serve as conceptual counterpoints, showing what must be ruled out for a stable continuum to emerge.

\subsection*{Three Principles of the Constructive Continuum}

The fractal continuum arises from three interlocking principles:

\begin{enumerate}
    \item \textbf{Stratification:} Each definability level \( \mathbb{R}^{[\leq n]} \) represents a cumulative stage of epistemic growth, with
    \[
        \mathbb{R}_{\mathcal{F}_n} \subsetneq \mathbb{R}_{\mathcal{F}_{n+1}}.
    \]

    \item \textbf{Stability:} Within any admissible chain, definability is persistent:
    \[
        r \in \mathbb{R}_{\mathcal{F}_k} \Rightarrow r \in \mathbb{R}_{\mathcal{F}_m} \quad \text{for all } m \geq k.
    \]

    \item \textbf{Completeness:} The totality of constructively definable reals is captured by:
    \[
        \mathbb{R}^{\mathbb{F}_\omega} = \bigcup_{n \in \mathbb{N}} \mathbb{R}^{[\leq n]}.
    \]
\end{enumerate}

Together, these principles define a continuum that is epistemically traceable and formally stratified, yet ultimately coextensive in cardinality with the classical \( \mathbb{R} \).

\subsection*{From Cardinality to Expressibility}

Unlike classical models, which measure sets by abstract cardinality, our framework interprets the continuum through \emph{epistemic accessibility}. A real number exists not by fiat, but through definitional reachability within some admissible path. This process-oriented perspective reveals that many classically trivial reals (e.g., computable sums) lie beyond the provability strength of weak systems such as \( \mathsf{RCA}_0 \), unless supplemented by specific convergence axioms.

Thus, definability becomes sensitive to formal context. The constructive continuum reflects a universe where existence is relative to formal expressibility — not merely syntactic form or external set membership.

\subsection*{Beyond the Continuum Hypothesis}

In this model, the Continuum Hypothesis (CH) loses its applicability. Since there is no canonical, monolithic set \( \mathbb{R} \), but rather a hierarchy of definability layers \( \mathbb{R}^{[\leq n]} \), the classical question “Is there a set of cardinality strictly between \( \aleph_0 \) and \( \mathfrak{c} \)?” dissolves. Each layer \( \mathbb{R}^{[\leq n]} \) is uncountable, yet constructively reachable — a kind of \emph{intermediate continuum}.

This stratification yields a natural family of CH-alternatives: the layers are all equicardinal with \( \mathbb{R} \), but internally ordered by definitional depth. The real continuum becomes not a point of cardinal abstraction, but a spectrum of expressible structures.

\subsection*{The Continuum as a Stratified Construction}

In conclusion, the classical continuum may be seen not as a primitive totality, but as a semantic shadow of a deeper stratified architecture. The fractal continuum is not a single set given all at once, but a cumulative unfolding of definability across an infinite space of formal systems. Each real number belongs to a definability layer, carrying the mark of its logical origin.

This paradigm suggests a foundational reformulation: from set-theoretic existence to syntactic constructibility, from absolute cardinality to internal definitional structure, from homogeneity to stratification. The continuum becomes not a static totality, but a layered horizon of formal expressibility — a hierarchy of mathematical meaning, grounded in provability rather than in size.

The fractal continuum thus illuminates the full extent of the numbers we can meaningfully comprehend, offering classical mathematics a refined toolkit for engaging with definable structure, while leaving truly inaccessible reals beyond the boundary of formal cognition.


\begin{thebibliography}{1}

\bibitem{brouwer1907}
L.E.J. Brouwer.
\newblock {\em Over de Grondslagen der Wiskunde}.
\newblock PhD thesis, University of Amsterdam, 1907.
\newblock Doctoral dissertation.

\bibitem{cantor1874ueber}
Georg Cantor.
\newblock {\"U}ber eine eigenschaft des inbegriffes aller reellen algebraischen zahlen.
\newblock {\em Journal für die reine und angewandte Mathematik}, 77:258--262, 1874.

\bibitem{dedekind1963stetigkeit}
Richard Dedekind.
\newblock {\em Stetigkeit und irrationale Zahlen}.
\newblock Vieweg+Teubner Verlag, 1963.
\newblock Original work published 1872.

\bibitem{Semenov2025FractalAnalysis}
Stanislav Semenov.
\newblock Fractal analysis on the real interval: A constructive approach via fractal countability, 2025.
\newblock Preprint, available at Zenodo.

\bibitem{Semenov2025FractalOrigin}
Stanislav Semenov.
\newblock Fractal origin of the continuum: A hypothesis on process-relative definability, 2025.
\newblock Preprint, available at Zenodo.

\bibitem{Simpson2009Subsystems}
Stephen~G. Simpson.
\newblock {\em Subsystems of Second Order Arithmetic}.
\newblock Cambridge University Press, 2nd edition, 2009.

\bibitem{Soare2016}
Robert~I. Soare.
\newblock {\em Turing Computability: Theory and Applications}.
\newblock Springer Monographs in Mathematics. Springer, 2016.

\end{thebibliography}

\clearpage
\appendix
\section*{Appendix A: On the Constructive Validity of Step 3 of the Continuity Theorem}
\label{appendix:step3-validity}
\addcontentsline{toc}{section}{Appendix A: Constructive Validity of Step 3}

\subsection*{1. The Problem of Verifying Strict Inclusion}

In Step 3 of the proof of Theorem~\ref{thm:Fomega-continuity}, we claimed that the verification procedure \( V \) can effectively confirm the strict inclusion
\[
\mathbb{R}_{\mathcal{F}_{f(n)}} \subsetneq \mathbb{R}_{\mathcal{F}_{f(n+1)}}.
\]
This holds constructively only for admissible chains whose systems \( \{\mathcal{F}_n\} \) satisfy one of the structural conditions (A), (B), or (C) below. We now justify this refinement in detail.

\subsection*{2. Sufficient Conditions for Effective Verification}

To make the verification procedure \( V \) computable, we require one of the following constraints on the class of systems \( \{\mathcal{F}_n\} \):

\subsubsection*{(A) Stratified Arithmetic Hierarchies}

Let each \( \mathcal{F}_n \) be a formal system of the form \( \mathsf{PA} + \text{``oracle for } \Sigma^0_n \text{ truth''} \). Then:
\begin{itemize}
    \item \( \mathbb{R}_{\mathcal{F}_n} \) corresponds to the \( \Delta^0_{n+1} \)-definable reals;
    \item The inclusion \( \mathbb{R}_{\mathcal{F}_n} \subsetneq \mathbb{R}_{\mathcal{F}_{n+1}} \) follows from standard recursion-theoretic results;
    \item The witness real \( r_n \) can be explicitly constructed as:
    \[
    r_n = \sum_{k=0}^\infty \frac{\chi_{\emptyset^{(n+1)}}(k)}{2^{k+1}},
    \]
    where \( \chi_{\emptyset^{(n+1)}} \) is the characteristic function of the \( (n+1) \)-st Turing jump.
\end{itemize}

In this setting, the verification \( V \) reduces to detecting provability of convergence of a rational sequence with a modulus definable only at level \( n+1 \).

\subsubsection*{(B) Gödelian Extensions via Consistency Statements}

Let \( \mathcal{F}_{n+1} = \mathcal{F}_n + \mathrm{Con}(\mathcal{F}_n) \). Then:
\begin{itemize}
    \item By Gödel’s Second Incompleteness Theorem, \( \mathcal{F}_n \not\vdash \mathrm{Con}(\mathcal{F}_n) \);
    \item Therefore, a real \( r_n \) definable in \( \mathcal{F}_{n+1} \) via the arithmetization of \( \mathrm{Con}(\mathcal{F}_n) \) cannot be defined in \( \mathcal{F}_n \);
    \item The verification procedure \( V \) constructs such a real via Gödel numbering of \( \mathcal{F}_n \)’s syntax. This assumes access to an external computable meta-system \( \mathcal{M} \) that can:
    \begin{itemize}
      \item enumerate theorems of \( \mathcal{F}_n \), and
      \item confirm that \( \mathcal{F}_n \not\vdash \mathrm{Con}(\mathcal{F}_n) \).
    \end{itemize}
\end{itemize}

This approach presumes that each system is sufficiently expressive to represent its own syntax and prove totality of relevant functions.

\begin{remark}
The meta-system \( \mathcal{M} \) used for verification in case (B) must be capable of reasoning about \( \mathcal{F}_n \) externally. In particular:
\begin{itemize}
    \item \( \mathcal{M} \) proves \( \mathrm{Con}(\mathcal{F}_n) \), while \( \mathcal{F}_n \) does not;
    \item \( \mathcal{M} \) can decide \( \Sigma^0_1 \)-statements of the form “\( \mathcal{F}_n \vdash \phi \)” for bounded formulas \( \phi \).
\end{itemize}
\end{remark}

\subsubsection*{(C) Uniformly Computable Chains}

Suppose \( \{\mathcal{F}_n\} \) is a computable sequence of systems, each given by a finite syntactic description. Then:
\begin{itemize}
    \item Each \( \mathcal{F}_{n+1} \) extends \( \mathcal{F}_n \) by adding either:
    \begin{itemize}
      \item a Turing jump oracle for \( \emptyset^{(n)} \), or
      \item a definability schema capturing \( \Sigma^0_{n+1} \)-truths.
    \end{itemize}
    \item The comparison \( \mathbb{R}_{\mathcal{F}_n} \subsetneq \mathbb{R}_{\mathcal{F}_{n+1}} \) can be effectively checked by analyzing definitional power via a universal verifier;
    \item The procedure \( V \) simulates derivations in both \( \mathcal{F}_n \) and \( \mathcal{F}_{n+1} \), and confirms a real \( r_n \) is not definable in the weaker system.
\end{itemize}

This model matches frameworks in computability theory where systems are generated by recursive rule sets or definability hierarchies.

\noindent\textbf{Consistency with the Cardinality Argument.}
Despite these constraints, the bijection between \( \mathbb{F}_\omega \) and Cantor space remains valid because:
\begin{itemize}
    \item The restricted class of systems still supports computably enumerable chains;
    \item The construction of \( \mathbb{F}_\omega \) still uses characteristic functions with infinitely many 1s;
    \item The diagonalization argument in Step 5 remains valid;
    \item No appeal to uncountable sets or choice is required.
\end{itemize}

\subsection*{3. Impact on the Theorem}

\begin{table}[ht]
\centering
\renewcommand{\arraystretch}{1.25}
\begin{adjustbox}{max width=\textwidth}
\begin{tabular}{@{} l p{8.8cm} p{6.4cm} @{}}
\toprule
\textbf{Approach} & \textbf{Verification Method} & \textbf{Cardinality Preservation} \\
\midrule
(A) Arithmetic &
\( \Sigma^0_{n+1} \)-complete witnesses constructed via convergence moduli definable only at level \( n+1 \). &
\( \mathfrak{c} \) via diversity of oracle-based arithmetical hierarchies. \\

(B) Gödelian &
Witnesses based on the unprovability of \( \mathrm{Con}(\mathcal{F}_n) \); constructed via Gödel coding in \( \mathcal{F}_{n+1} \). &
\( \mathfrak{c} \) via layering of consistency-based extensions. \\

(C) Computable &
Comparison of definability via simulation in uniformly generated computable chains of systems. &
\( \mathfrak{c} \) via enumeration of infinite computable rule extensions. \\
\bottomrule
\end{tabular}
\end{adjustbox}
\caption{Constructive Strategies for Verifying Strict Definability Growth in Chains \( \{\mathcal{F}_n\} \)}
\end{table}

The cardinality argument in Theorem~\ref{thm:Fomega-continuity} remains valid under any of the above restrictions. Although the admissible chains \( \{\mathcal{F}_n\} \) are now constrained to satisfy one of the constructive frameworks (A), (B), or (C), this does not reduce the cardinality of the set \( \mathbb{F}_\omega \). In each case:
\begin{itemize}
    \item For (A): There exist continuum-many distinct \( \Sigma^0_n \)-oracles defining different admissible chains;
    \item For (B): Consistency-based extensions can be varied through Gödel rotations and self-referential encodings;
    \item For (C): The space of computable syntactic extensions contains \( \mathfrak{c} \)-many distinct rule sets.
\end{itemize}

Hence, the class \( \mathbb{F}_\omega \) still injects into the set of computably enumerable infinite subsets of \( \mathbb{N} \), and retains cardinality \( \mathfrak{c} \), as established via bijection with Cantor space in Step 4.

The refinement of Step 3 thus strengthens the constructivity of the proof without altering its essential conclusion.

\subsection*{4. Summary}

We have clarified that Step 3 of the continuity theorem is constructively valid only under specific structural restrictions on the formal systems in \( \mathbb{F}_\omega \). By adopting any of the frameworks (A), (B), or (C), we ensure that the verification procedure \( V \) is effective and the definability growth condition is algorithmically checkable.

These refinements reinforce the purely constructive nature of the result, preserving the cardinality of the fractal definability space while grounding it more firmly in effective syntactic criteria.

\end{document}